\documentclass[10pt]{article}
\usepackage[top=4cm,left=2.8cm,right=2.8cm,bottom=3.5cm]{geometry}
\usepackage{amsmath,amsthm}
\usepackage{amsfonts}
\usepackage{amssymb}
\usepackage{graphicx}
\usepackage{rotating}
\usepackage{psfrag}
\usepackage{color}
\usepackage{soul}
\usepackage{cite}
\usepackage{tikz}
\usetikzlibrary{er,positioning}
\usepackage{graphics}
\usepackage{graphicx}
\usepackage{algpseudocode,algorithm,algorithmicx}
\usepackage{xspace}
\usepackage{adjustbox}
\usepackage{subcaption}
\usepackage{setspace}
\usepackage{verbatim}
\usepackage{fullwidth}
\usepackage{float}
\usepackage{listings}
\usepackage{lipsum}
\usepackage{framed}
\usepackage{caption}
\usepackage{booktabs,multirow,array}
\usepackage{multirow}
\usepackage{multicol}
\usepackage{mathtools}
\mathtoolsset{showonlyrefs,showmanualtags}
\usepackage{hyperref}
 \usepackage{array}
\usepackage{colortbl}
\usepackage[title]{appendix}
\usepackage{etoolbox}
\usepackage[font=small,labelfont=bf]{caption}
\usepackage{amsmath,amssymb}
\usepackage{amsthm}
\usepackage{enumitem}
\usepackage{graphicx}
\usepackage{epstopdf }
\usepackage{setspace}
\usepackage[english]{babel}
\usepackage{float}
\usepackage{chngcntr}
\usepackage{cite}
\usepackage{physics}
\usepackage{authblk}
\usepackage{hyperref}
\usepackage{subcaption}
\counterwithout{equation}{section}
\usepackage[retainorgcmds]{IEEEtrantools}
\usepackage{mathptmx}
\usepackage{mathrsfs}

\newcommand{\ep}{\epsilon}

\newcommand{\cK}{\mathcal{K}}
\newcommand{\cP}{\mathscr{P}}

\definecolor{addc}{rgb}{0.0,0.50,0.0}

\theoremstyle{plain}
\numberwithin{equation}{section}
\newtheorem{lemma}{Lemma}[section]      
\newtheorem{sublemma}{Lemma}[lemma]     
\newtheorem{thm}{Theorem}[section]

\theoremstyle{definition}
\newtheorem{definition}{Definition}[section]

\theoremstyle{remark}
\newtheorem{myrem}{Remark}[section]
\numberwithin{equation}{section}
\begin{document}

\title{\Large 
\emph{A Posteriori} Error Analysis of Finite Element Approximation to an Evolution Equation with a Positive-Type Memory Term}

\author{Sandhya Mel$^{1}$, Amiya Kumar Pani$^{2}$\footnote{Corresponding author:amiyap@goa.bits-pilani.ac.in}, Gujji Murali Mohan Reddy$^{1}$
\\{\small $^{1}$Department of Mathematics,}\\{\small BITS-Pilani Hyderabad Campus,
Telangana - 500078, India}
\\{\small $^{2}$Department of Mathematics,}\\{\small BITS-Pilani, KK Birla Goa Campus, NH 17 B, Zuarinagar,}\\{\small Goa - 403726, India}}
\date{}
\maketitle
\begin{abstract}
This paper develops reliable \emph{a posteriori} error estimates for finite element approximation of an evolution equation with a positive-type memory kernels on bounded convex polygonal or polyhedral domains, including both smooth and weakly singular kernels. For the weakly singular kernel $t^{\alpha-1}/\Gamma(\alpha),\; 0<\alpha<1,$ the model captures the transition from parabolic diffusion to wave propagation, recovering the corresponding limiting equations as $\alpha\to0^+$ and $\alpha\to1^-$, respectively, with the memory term governing the intermediate dynamics.
The spatial discretization employs time-dependent finite element spaces, while the backward Euler method combined with numerical quadrature is used for time integration. A unified \emph{a posteriori} analysis is established for both semidiscrete and fully discrete approximations. Elliptic reconstruction yields optimal spatial error estimates, while a memory-lag estimate preserves the essential cancellation in the convolution term, yielding a first-order temporal estimator whose kernel dependence is characterized by its integral. The resulting estimators are robust, with constants independent of the final time, spatial mesh size, and time step. Their kernel dependence involves only computable quantities, avoiding exponential growth in time. Moreover, for the weakly singular kernel, the $\alpha$-dependent factors remain uniformly bounded for $\alpha\in(0,1).$ Numerical experiments on a convex domain validate the theoretical results, and a space–time adaptive algorithm is also demonstrated on an L-shaped domain featuring a corner singularity and two temporal layers.
\end{abstract}
{\bf Keywords.}
Evolution equation, positive-type kernel, finite element method, backward Euler scheme, \emph{a posteriori} error estimate \\
{\bf Mathematics Subject Classification (2020)} 65M15, 65M60
\section{Introduction}\label{sec:1}

This paper deals with \emph{a posteriori} error estimators for the following  evolution equation with a positive-type kernel covering both smooth and weakly singular kernels.
Now, find $u(x,t)$ defined on the space-time cylinder $\Omega \times [0,T]$ such that 
\begin{align}
 u_{t}(x,t)+ \int_{0}^t \mathcal{K}(t-s) Au(x,s)ds &= f(x,t), \;\;\;\;\; (x,t) \in \Omega \times (0,T], \label{ee}\\
 u(x,0) &= u_{0}(x), \;\;\;\; x \in \Omega, \nonumber \\
 u(x,t) &= 0,\;\;\;\;\;\;\;\;\;\; (x,t) \in \partial \Omega \times (0,T], \nonumber 
\end{align}
where $\Omega$ is a bounded convex polygonal or polyhedral domain in  $\mathbb{R}^d (d = 2,3)$ with boundary $\partial \Omega$. 
Here,  $u_{t}(x,t)=\frac{\partial }{\partial t}u(x,t)$ and $A$ is a self-adjoint, positive definite second-order linear  elliptic partial differential operator, say  of the form:
\begin{equation*}
    Au=-\nabla\cdot(\mathbf{A}\nabla u),
\end{equation*}
where $``\nabla"$ denotes the  spatial gradient and the matrix  
$\mathbf{A}=\{a_{ij}(x)\}$ is a $d\times d$ positive definite matrix  assumed to be in 
$L^{\infty}(\Omega)^{d\times d}$ in space variables. 
The real-valued functions $u_0(x)$ and $f(x,t)$ are assumed to be smooth. Further, we assume that the kernel $\mathcal{K}$ is locally integrable on $[0,\infty)$ and is 
positive definite, i.e.
\begin{equation} \label{PK}
     \int^T_{0} \phi(t)\left(\int_{0}^t \mathcal{K}(t-\tau)\phi(\tau) d\tau \right) dt\ge 0\;\;\;\;\;\;\;\;  \forall T>0, \;\;\;\phi\in C([0,T]).
\end{equation}
For more details, see \cite{MT93,LS96}.

This paper examines two different classes of positive definite kernels: (i) when 
$\mathcal{K}(t)$ is smooth and (ii) when $\mathcal{K}(t)$ has a weak singularity at 
$t=0$. 

For smooth kernels, we assume that there exist constants $\gamma,\gamma_1 > 0$ such 
that, for all $t,s \in [0,T]$, with $s\leq t$, 
 \begin{align} \label{kernel}
     |\mathcal{K}(t-s)|\leq \gamma,\;\;\;\mbox{and}\;\; 
   |\mathcal{K}'(t-s)|\leq \gamma_1, 
 \end{align}
where $\mathcal{K}'$ denotes the derivative of $\mathcal{K}$ with respect to its argument,
whereas for weakly singular kernel, we assume that $\mathcal{K} \in L^1(0,T)$ and takes 
the form
\begin{equation}
\mathcal{K}(t-s)=\frac{(t-s)^{\alpha-1}}{\Gamma(\alpha)}, \label{wsk}
 \end{equation}
where $0<\alpha<1$ and $\Gamma(\cdot)$ denotes Euler's gamma function. The model exhibits a continuous transition from diffusion-dominated to wave-like dynamics as the fractional parameter $\alpha$ varies from $0$ to $1.$ Specifically, it recovers the classical parabolic equation in the limit $\alpha\to0^+$ and a wave-type equation as $\alpha\to1^-.$ Consequently, for $\alpha\in(0,1),$ the model captures the intermediate dynamical regime, in which the nonlocal memory term reflects characteristics of both diffusive and wave-like behavior.

Throughout, we use the primitive
\begin{equation}\label{K1}
\mathcal{K}_1(t):=\int_0^t |\mathcal{K}(s)|\,ds,
\qquad
\left\{
\begin{aligned}
\mathcal{K}_1(t)&\leq \gamma t
&&\text{under \eqref{kernel}},\\[4pt]
\mathcal{K}_1(t)&=\frac{t^{\alpha}}{\Gamma(\alpha+1)}
&&\text{for \eqref{wsk}}.
\end{aligned}
\right.
\end{equation}
The positive definiteness of $\mathcal{K}$ in conjunction with a standard energy 
argument  easily leads to stability results, see  \cite{MT93}. 

Evolution equations with a memory term containing positive-type kernels hold 
particular significance due to their prevalence in various physical phenomena. 
These equations are commonly encountered in diverse applications, such as modelling heat conduction in materials with memory \cite{M77} and describing compressible viscoelastic media \cite{HGS70}. While the inclusion of the Volterra integral term enhances the accuracy of the description of physical phenomena, it introduces novel 
challenges in both theoretical frameworks (owing to limited regularity \cite{RS15})
and numerical computations (including substantial data storage \cite{MTW96}). 
These challenges set these equations apart from purely parabolic or hyperbolic 
problems. 

There is a significant amount of literature on \emph{apriori} error estimates 
for equations of the form \eqref{ee}, see \cite{MT93, MTW96, MT04, MT10, CP03, LS96, CLP06, YF88}. 
For the time discretization of \eqref{ee}, \emph{apriori} and \emph{a posteriori} error estimates for a discontinuous Galerkin scheme of backward Euler (BE) type, incorporating sparse quadrature in an abstract Hilbert space setting, are established in \cite{AEL03} and subsequently are employed to develop an adaptive time stepping strategy. The \emph{a posteriori} analysis yields error bounds at the discrete time levels through a duality argument that relies on stability estimates for the adjoint problem. However, the analysis is limited to temporal discretization and does not account for spatial discretization errors. Moreover, the resulting error bound contains a logarithmic factor that depends on the ratio of the final time to the smallest time step, which may affect the sharpness of the estimate.
\emph{A posteriori} error estimates are also available for the related time-fractional subdiffusion problems
\cite{BM22,K22}, whose structure differs from that of \eqref{ee}. To the best of our knowledge, however,
\emph{a posteriori} error estimates for fully discrete finite element approximations of \eqref{ee} that account
for both the spatial and the temporal discretization errors have not been derived so far. This article attempts to address this gap. We now present relevant literature on techniques developed for time-dependent problems. \emph{A posteriori} error estimation of time-dependent problems is investigated by several researchers \cite{EJ91,P98, BBM05, 
FGN07,V03, GLM13} and the references therein. In particular, the elliptic 
reconstruction operator is introduced in \cite{MN03} to restore the optimal order in 
space using energy techniques. Subsequently, several other problems have been
investigated \cite{LM06,AMN06,LMP15,DLM09,BKM12} using the reconstruction technique to 
establish the optimal rate for \emph{a posteriori} convergence analysis. More 
recently, reliable \emph{a posteriori} error estimates have been established for 
parabolic integro-differential equations with smooth kernels using the Ritz-Volterra reconstruction operator \cite{RS15, RSC19, R22}, a generalization of the 
elliptic reconstruction operator.

While elliptic reconstruction is well-established for standard parabolic problems, its application to \eqref{ee} is obstructed by the nonlocal memory kernel, which couples the spatial reconstruction at time $t$ to the entire solution history. This coupling introduces several novelties in this work.

\noindent\textbf{Main contributions.} The analysis builds upon the elliptic reconstruction technique \cite{MN03,LM06}. The main contributions of this article are as follows.
\begin{itemize}
\item \emph{Semidiscrete estimate.} For the spatially semidiscrete Galerkin approximation, we derive a computable upper residual bound for the error in the $L^\infty(0,T; L^2(\Omega))$-norm (Theorem~\ref{Thm:3.1}). The estimate is obtained without any Gronwall argument, and the constants that appear in the bounds are independent of the mesh parameter, $T$, and the kernel.

\item \emph{Fully discrete estimate on time-varying meshes.} For the BE scheme combined with a positive-type quadrature rule and finite element spaces that may change from one time level to the next, we prove an upper bound in the same norm (Theorem~\ref{Final_Th}) in terms of fully computable estimators. Each estimator is attributed to one source of error: elliptic reconstruction ($\eta^{\mathrm{rec}}_n$), space ($\eta^S_n$), time ($\eta^T_n$), quadrature ($\eta^{Q}_n$), memory lag and history mesh change ($\eta^{K}_n=\eta^L_n+\eta^H_n$), and mesh change ($\eta^M_n$). 

\item \emph{A memory-lag estimate.} 
For the fully discrete scheme, the memory term is handled using a memory-lag estimate (Lemma~\ref{lem:4.6.3}) that retains the cancellation between the memory at the current time and at the discrete time level; the resulting estimator is first-order in time, consistent with the BE scheme, and depends on the kernel only through its integral.
The history mesh-change estimator $\eta^H_n$ (Lemma~\ref{lem:4.6.4}) measures the effect of mesh modification on the discrete history and vanishes when the mesh is not changed. Together with $\eta^M_n$, the memory-lag estimate quantifies the effect of mesh changes satisfying the
compatibility and relative-refinement assumptions of Section \ref{sec:4} by computable quantities.
In contrast to \cite{AEL03}, we derive an \emph{a posteriori} error estimate in the $L^\infty(0,T;L^2(\Omega))$-norm that holds uniformly for all $t\in[0,T],$ rather than only at the discrete time levels. The analysis in this article is based on an energy argument that relies solely on the positive definiteness property \eqref{PK} of the kernel. Furthermore, the resulting error bound does not involve a logarithmic factor that depends on the ratio of the final time to the smallest time step size. By combining elliptic reconstruction to control the spatial discretization error with a memory-lag estimate to tackle the temporal nonlocality, we obtain a unified error analysis for the fully discrete approximation.

\item \emph{Unified treatment of the kernels without Gronwall's lemma.} Both smooth and weakly singular kernels are treated within one framework. The analysis uses the positive definiteness \eqref{PK} of the kernel instead of Gronwall's lemma, so that the bounds do not grow exponentially in time. The bound is robust in the sense that the constants appearing in the estimate are independent of the final time, time step sizes, and mesh sizes; the kernel enters only through the computable quantities. In particular, for $\cK(t)=t^{\alpha-1}/\Gamma(\alpha)$, the $\alpha$-dependent factors in the estimators remain uniformly bounded for all $\alpha\in(0,1)$, including in the limiting regimes $\alpha\to0^+$ and $\alpha\to1^-$.
\item \emph{Numerical validation.} Numerical experiments for a smooth and a weakly singular kernel confirm the theoretical findings. In addition, for adaptivity, we generate a space-time singular benchmark solution with a corner singularity in space and two layers in time by choosing a classical 
L-shaped-domain benchmark and present a space-time adaptive algorithm based on estimators. The computations use FreeFEM and MATLAB.  The algorithm refines the mesh near the corner and where the data vary, grades the time steps at the two temporal layers, and reduces the error at a rate close to the inverse square root of the total number of unknowns. 
\end{itemize}

The paper is organized as follows. In Section \ref{sec:2}, we introduce the notation and the variational formulation of \eqref{ee}.
In Section \ref{sec:3}, \emph{a posteriori} error estimates are derived for the 
spatially semidiscrete scheme. \emph{A posteriori} error estimates are established 
for the fully discrete BE scheme in Section \ref{sec:4}. In Section \ref{sec:5}, 
numerical investigations are performed. Finally, we conclude the article in Section \ref{sec:6}.

\section{Notation and preliminaries}\label{sec:2}
For a Lebesgue measurable open set $\mathcal{X} \subset \Omega$, and for $1\leq p \leq \infty$, we define the standard  $ L^{p}(\mathcal{X})$ spaces with norm $\|\cdot\|_{L^p(\mathcal{X})}.$

For $p = 2$, $L^2(\mathcal{X})$ is a Hilbert space with the inner-product
  \begin{equation*}
   \langle \phi, \psi \rangle_{\mathcal{X}} = \int_{\mathcal{X}} \phi \psi d\mathbf{x},~~ \phi, \psi \in L^2(\mathcal{X}),
 \end{equation*} 
that induces the norm $\lVert \cdot \rVert_{L^2(\mathcal{X})}$. When $\mathcal{X} = \Omega$, we omit the subscripts on the norm and the inner product. 

For $m \in \mathbb{N}$ and $p=2$, the Sobolev space $H^{m}(\mathcal{X})$ is a Hilbert space of order $m$ on $\mathcal{X}$ defined as
\[
H^{m}(\mathcal{X})=\{\psi\in L^{2}(\mathcal{X}):~ {\partial}^{\Upsilon} \psi\in L^{2}(\mathcal{X})~\text{for~all~multi-indices}~\Upsilon, ~0 \leq|\Upsilon| \leq m \},
\]
with the inner-product
\[
\langle\phi, \psi\rangle_{H^m(\mathcal{X})} = \underset{0 \leq|\Upsilon| \leq m}{\sum}
\int_{\mathcal{X}} \partial^{\Upsilon}\phi\partial^{\Upsilon} \psi d\mathbf{x},~~
\phi, \psi\in H^{m}(\mathcal{X}),
\]
and the induced norm 
\begin{eqnarray}
  \|\psi\|_{H^m(\mathcal{X})} 
 =\left(\underset{0 \leq |\Upsilon| \leq m}{\sum} \|\partial^{\Upsilon} \psi\|_{L^2(\mathcal{X})}^2  \right)^{\frac{1}{2}} \nonumber,
\end{eqnarray}
where $\partial^{\Upsilon}$ denotes the partial derivative corresponding to the multi-index $\Upsilon$.
In particular, when $m = 1$ or $2$ and $\mathcal{X} = \Omega$, we use the notation $\|\psi\|_m$ to represent the norm on $H^m(\mathcal{X})$ and $|\psi|_{m}$ to represent the corresponding seminorm. 
On a subset $\mathcal{X}\subset\Omega$ we write $|\psi|_{H^m(\mathcal{X})}$ for the seminorm and $\|\cdot\|_{\mathcal{X}}:=\|\cdot\|_{L^2(\mathcal{X})}$; thus $|\psi|_m=|\psi|_{H^m(\Omega)}$.
The space $H^1_0(\Omega)$ signifies the space of functions in $H^1(\Omega)$ that vanish on the boundary of $\Omega$ in the sense of trace.

Introduce the Bochner space \( L^2(0, T; L^2(\Omega))\) as 
the space of functions that are \( L^2(\Omega) \)-valued and square-integrable over the time interval \( [0, T] \). Similarly, we define the spaces 
\(L^{\infty}(0, T; L^2(\Omega))\) and \(L^{\infty}(0, T; H_0^{1}(\Omega))\).

Given $f \in L^{2}(0,T; L^{2}(\Omega))$ and $u_{0} \in L^2(\Omega)$, the variational formulation of the problem \eqref{ee} is to seek  $u \in L^{\infty}(0, T; H_0^{1}(\Omega))$ with $u_{t} \in L^{2}(0,T; L^{2}(\Omega))$ such that
\begin{align}\label{WF}
\langle u_{t},\phi \rangle + \int_{0}^{t} \mathcal{K}(t-s) a(u(s),\phi) ds&=\langle f,\phi \rangle ~~~~\forall \phi\in H_{0}^{1}(\Omega),~ a.e. ~ t \in [0,T],  \\
u(0) &=u_{0}, \nonumber
\end{align}
where $a(\cdot,\cdot)$ is a bilinear form on $H_{0}^{1}(\Omega)$ defined by
\begin{equation}
a(v,\phi)=\langle \mathbf{A} \nabla v,\nabla \phi \rangle, ~~~~~ \forall  v,\phi\in H_{0}^{1}(\Omega).
\end{equation}
We further assume that the bilinear form $a(\cdot,\cdot)$ is continuous and coercive on $H_{0}^{1}(\Omega)$, i.e., 
\begin{align}
    a(\psi,\phi)& \leq\beta |\psi|_1 |\phi|_1 ~~~~ \forall  \psi,\phi\in H_{0}^{1}(\Omega), \label{beta}\\
    a(\phi,\phi)& \geq \delta |\phi|_1^2\;\;\;\;\;\;\;\;\;\;\;\; \forall \phi\in H_{0}^{1}(\Omega),  \label{alpha}
\end{align}
with $\beta,\delta \in \mathbb{R}^+$. Note that under the assumed condition on the coefficient matrix  $\{a_{ij}(x)\}_{1\leq i,j\leq d},$ the bilinear form corresponding to $A$ satisfies the above properties. 

 We additionally assume the following elliptic $H^2$-regularity property: for every
$g\in L^2(\Omega)$, the weak solution $z\in H_0^1(\Omega)$ of
\[
 a(\phi,z)=\langle g,\phi\rangle
 \qquad\text{for all }\phi\in H_0^1(\Omega)
\]
satisfies
\begin{equation}\label{eq:elliptic-H2}
 |z|_{H^2(\Omega)}\le C_{\mathrm{ell}}\lVert g\rVert_{L^2(\Omega)}.
\end{equation}
Here $C_{\mathrm{ell}}$ depends on the domain, but not on the meshes, time steps or the memory kernel.

\section{Semidiscrete scheme}\label{sec:3}
In this section, we derive \emph{a posteriori} error estimates for the spatially semidiscrete Galerkin approximations to the problem \eqref{ee}.
 Let  $ \mathcal{T}_h$ be  a conforming, shape-regular partition of $\Omega$ with the mesh-size function  $h$  defined as 
\begin{align*}
     h(x)=\text{diam}(K), ~~\text{where}~ K\in\mathcal{T}_h~~ \text{and}~ x\in K.
\end{align*} 
 Let $\mathcal{E}_h=\{E\}$ be the set of all internal sides of $\mathcal{T}_h$. These are edges in $d=2$ or faces in $d=3$ that are contained in the interior of $\Omega$. The union of all internal sides $\cup_{E\in\mathcal{E}_h}E$ is denoted by $\Sigma_h$.
 
Now, consider the continuous piecewise linear finite element space corresponding to $\mathcal{T}_h$ as 
 \begin{equation*}
     \mathbb{V}_h=\{\chi\in H_{0}^{1}(\Omega):\chi_{|K}\in\mathbb{P}_1(K)~\forall K\in \mathcal{T}_h\},
 \end{equation*}
where $\mathbb{P}_1(K)$ is the space of linear polynomials over $K$ in $d$ dimensions. However, the analysis extends to piecewise polynomials of any degree. The semidiscrete finite element approximation $u_h:[0,T]\rightarrow \mathbb{V}_h$ of $u$ is defined by 
 \begin{align}\label{SFEA}
      \langle u_{h,t},\phi_h \rangle+\int_{0}^{t} \mathcal{K}(t-s) a(u_h(s),\phi_h) ds &=\langle f,\phi_h\rangle~~~~~\forall \phi_h\in \mathbb{V}_h,\\
      u_h(\cdot,0) &=\mathcal{P}_{h}u_0,\nonumber
 \end{align}
   where $\mathcal{P}_{h}:L^2(\Omega)\rightarrow \mathbb{V}_h$ is the $L^2$-projection operator which is defined as
    \begin{equation}\label{3.4}
      \langle \mathcal{P}_{h}v,\phi_h\rangle=\langle v,\phi_h\rangle ~~~~\forall\phi_h\in \mathbb{V}_h.
  \end{equation}
Now, define the discrete elliptic operator $A_h:\mathbb{V}_h\rightarrow \mathbb{V}_h$ by
  \begin{equation} \label{DEO}
      \langle A_hv,\phi_h\rangle=a(v,\phi_h)~~~~\forall\phi_h\in \mathbb{V}_h.
  \end{equation}
Using the definitions above, the semidiscrete operator form of \eqref{SFEA} can be written as
  \begin{equation}\label{DF_S}
     u_{h,t}+\int_{0}^{t} \mathcal{K}(t-s) A_hu_h(s) ds=f_h,
 \end{equation}
  where $f_h=\mathcal{P}_{h}f$.
  
To construct the residuals, following \cite{LM06}, for a function $v\in \mathbb{V}_h$, the bilinear form can be represented as 
  \begin{equation}\label{3.2}
      a(v,\phi)= \langle A_{el}v,\phi\rangle + \langle J[v],\phi\rangle_{\Sigma_h}
      ~~~~\forall \phi\in H_{0}^{1}(\Omega),
  \end{equation}
  where $A_{el}v$ is the regular part of the distribution $-\nabla\cdot(\mathbf{A}\nabla v)$, which is defined as a piecewise continuous function (see, \cite[(25)]{LM06}) such that 
   \begin{equation}\label{Ael}
      \langle A_{el}v,\phi\rangle=\sum_{K\in \mathcal{T}_h}\langle-\nabla\cdot(\mathbf{A}\nabla v),\phi\rangle_K
      ~~~~\forall \phi\in H_{0}^{1}(\Omega),
  \end{equation}
  and $J[v]$ is the spatial jump of the field $\mathbf{A} \nabla v$ across an element side $E\in \mathcal{E}_h$ defined as 
  \begin{equation}\label{J_v}
      J[v]|_{E}(x)=[\mathbf{A}\nabla v]_E(x):=  \lim_{\kappa \to 0}(\mathbf{A}\nabla v(x-\kappa \nu_E )- \mathbf{A}\nabla v(x+\kappa  \nu_E ))\cdot \nu_E,
  \end{equation}
 where $\nu_E$ is a fixed choice of one of the two possible unit normal vectors to $E$.

Inspired by \cite{MN03,LM06}, we now introduce the elliptic reconstruction operator.
  \begin{definition}
  For a given $v\in \mathbb{V}_h$, define the elliptic reconstruction operator $ \mathcal{R}_h:\mathbb{V}_h\rightarrow H_{0}^{1}(\Omega)$ associated with the bilinear form $a(\cdot,\cdot)$ as
  \begin{equation}\label{ER}
      a(\mathcal{R}_hv,\phi)=\langle A_hv,\phi\rangle~~~~\forall\phi\in H_{0}^{1}(\Omega).
  \end{equation}
  \end{definition}
  \noindent
Note that the function $\mathcal{R}_hv$ is called the elliptic reconstruction of $v$. In view of the definition \eqref{ER}, the Galerkin orthogonality follows, i.e.,
  \begin{equation}\label{GO}
      a(\mathcal{R}_hv-v,\phi_h)=0~~~~\forall\phi_h\in \mathbb{V}_h.
  \end{equation}

In order to derive \emph{a posteriori} error bounds, we decompose the main error $e:=u_h-u$ as follows:
  \begin{equation}\label{splite}
      e=\rho-\epsilon, ~~~ where~~ \rho:=\mathcal{R}_hu_h-u,~~\epsilon:=\mathcal{R}_hu_h-u_h, 
  \end{equation}
where $\epsilon$ is referred to as the reconstruction error, while $\rho$ represents the temporal error. 

\noindent
Recall the interpolation error estimates.  
  \begin{lemma}[Interpolation inequalities \cite{S90}]\label{l4.1}
  Let  $\Pi_h:H_{0}^{1}(\Omega)\rightarrow \mathbb{V}_h$ be the Scott-Zhang interpolation operator. Then, for sufficiently smooth $\psi$ and for finite element space of polynomial degree $l$, the following interpolation inequalities hold for $j\leq l+1$  
  \begin{align}
      \|h^{-j}(\psi-\Pi_h\psi)\| &\leq C_{1,j}|\psi|_j, \\
      \|h^{1/2-j}(\psi-\Pi_h\psi)\|_{\Sigma_h} &\leq C_{2,j}|\psi|_j,
\end{align}
  where the constants $C_{1,j}$ and $C_{2,j}$ depend only on the shape-regularity of the family of triangulations.
\end{lemma}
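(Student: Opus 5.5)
The two inequalities are the standard Scott–Zhang estimates, and I will derive them from local versions on elements and sides, then sum using shape regularity.

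\textbf{Element bound.} For each element $K\in\mathcal{T}_h$, let $\omega_K$ be the patch of elements sharing a vertex with $K$. I first recall the two defining features of $\Pi_h$:
\begin{itemize}
\item it is built from dual-basis averages over $(d-1)$-dimensional sides, so it is well defined on $H^1_0(\Omega)$ and preserves homogeneous boundary values;
\item it reproduces polynomials of degree $l$ locally.
\end{itemize}
From these, $\Pi_h$ is locally $H^j$-stable on $\omega_K$ after scaling. For any $q\in\mathbb{P}_l(\omega_K)$, polynomial invariance gives $\psi-\Pi_h\psi=(\psi-q)-\Pi_h(\psi-q)$ on $K$. A Bramble–Hilbert (or Deny–Lions) argument on the reference patch then yields
\[
\|\psi-\Pi_h\psi\|_{L^2(K)}\le C h_K^{j}|\psi|_{H^j(\omega_K)}, \qquad 1\le j\le l+1.
\]
Transforming back requires affine equivalence and a bounded number of patch configurations; both follow from shape regularity. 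Since $h$ is piecewise constant and comparable on neighbouring elements, I can write $\|h^{-j}(\psi-\Pi_h\psi)\|_{K}\le C|\psi|_{H^j(\omega_K)}$. Squaring and summing over $K$, each element lies in a uniformly bounded number of patches, so the first inequality follows with $C_{1,j}$ depending only on shape regularity.

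\textbf{Side bound.} For the estimate on $\Sigma_h$, I use the scaled trace inequality on an element $K$ adjacent to an interior side $E$:
\[
\|v\|_{L^2(E)}^2\le C\big(h_K^{-1}\|v\|_{L^2(K)}^2+h_K|v|_{H^1(K)}^2\big).
\]
I apply it with $v=\psi-\Pi_h\psi$. This needs the local $H^1$ estimate $|\psi-\Pi_h\psi|_{H^1(K)}\le Ch_K^{j-1}|\psi|_{H^j(\omega_K)}$, which is proved the same way as the element bound. Multiplying by $h_E^{1-2j}$ gives $\|h^{1/2-j}(\psi-\Pi_h\psi)\|_E\le C|\psi|_{H^j(\omega_K)}$. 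Summing over $E\in\mathcal{E}_h$, again using finite overlap, gives the second inequality.

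\textbf{Main obstacle.} The delicate step is the case $j=1$ together with the boundary condition. Local stability then has to use only $H^1$ information. The construction handles this by averaging over sides rather than taking point values, and by choosing boundary sides for boundary nodes so that $\Pi_h\psi\in\mathbb{V}_h$.

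Since all of this is exactly Scott and Zhang's analysis \cite{S90}, in the paper I would simply cite it rather than reproduce the argument.
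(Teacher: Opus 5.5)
Your proposal is correct and matches the paper, which gives no proof of its own and simply cites Scott--Zhang \cite{S90}. Your sketch (local polynomial reproduction and stability on vertex patches, Bramble--Hilbert, the scaled trace inequality for the side term, and finite overlap of patches) is exactly the argument of that reference. Your explicit restriction to $1\le j\le l+1$ is the right reading of the hypothesis $j\le l+1$: the Scott--Zhang operator is not uniformly $L^2$-stable, so both inequalities fail for $j=0$.
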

The following lemma provides bounds on the elliptic reconstruction error $\epsilon$ and its time derivative. 
\begin{lemma}[Elliptic reconstruction error estimate]\label{l4.2}
For the time-dependent approximation $u_h (t) \in \mathbb{V}_h$, the following error estimates hold:
\begin{align}
     \|\epsilon(t)\| \leq \Theta(u_h(t))\;\;\;\;\;\\
     \mbox{and}\;\;\;\; \|\epsilon_t(t)\|\leq \Theta (u_{h,t}(t)),\label{T-RE}
\end{align}    
    where 
    \begin{align*}
     \Theta(u_h(t)) &:= \widetilde C_1\norm{h^2(A_{\mathrm{el}}u_h-A_hu_h)}+\widetilde C_2\norm{h^{3/2}J[u_h]}_{\Sigma_h},\\
     \Theta(u_{h,t}(t))&:= \widetilde C_1\norm{h^2(A_{\mathrm{el}}u_h-A_hu_h)_t}+\widetilde C_2\norm{h^{3/2}J[u_{h,t}]}_{\Sigma_h},
 \end{align*}
with the constants $\widetilde C_1$ and $\widetilde C_2$ depending on the elliptic regularity and interpolation constants.

\begin{proof}
The argument follows the structure of \cite[Lemma~2.3]{LM06}. For brevity, we omit the proof.
\end{proof}
\end{lemma}
For the temporal error $\rho$, we now derive an \emph{a posteriori} estimate in the following lemma.
 \begin{lemma}[Temporal error estimate] \label{l4.4}
 The following estimate holds for the temporal error $\rho=\mathcal{R}_hu_h-u:$
\begin{equation*}
  \norm{\rho(t)}\leq \norm{\rho(0)}+2\int_{0}^{t}\Big[\Theta(u_{h,t}(s))+\norm{(f_h-f)(s)}\Big]ds.    
\end{equation*}
 \end{lemma}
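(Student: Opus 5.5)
We derive an error equation for $\rho$ and then run an energy argument that uses only the positive definiteness \eqref{PK}. For any $\phi\in H_0^1(\Omega)$, subtract \eqref{WF} from $\langle u_{h,t},\phi\rangle$ and use \eqref{ER}: $a(\mathcal{R}_hu_h,\phi)=\langle A_hu_h,\phi\rangle$. Since $u_{h,t}=\rho_t-\epsilon_t+u_t$ (from $e=\rho-\epsilon$), and since \eqref{DF_S} gives $u_{h,t}+\int_0^t\mathcal{K}(t-s)A_hu_h(s)\,ds=f_h$ in $\mathbb{V}_h\subset L^2(\Omega)$, we obtain
\begin{equation*}
\langle \rho_t,\phi\rangle+\int_0^t\mathcal{K}(t-s)a(\rho(s),\phi)\,ds=\langle \epsilon_t,\phi\rangle+\langle f_h-f,\phi\rangle\qquad\forall\phi\in H_0^1(\Omega).
\end{equation*}
This holds because $\int_0^t\mathcal{K}\,a(\mathcal{R}_hu_h,\phi)=\langle \int_0^t\mathcal{K}A_hu_h,\phi\rangle=\langle f_h-u_{h,t},\phi\rangle$ and $\langle u_t,\phi\rangle+\int_0^t\mathcal{K}a(u,\phi)=\langle f,\phi\rangle$. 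The key point is that the elliptic reconstruction removes every spatial residual from the memory term, so no Galerkin orthogonality or interpolation is needed at this stage.

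Next, test with $\phi=\rho(t)$, which is admissible since $\rho(t)\in H_0^1(\Omega)$, and integrate over $(0,t^*)$, where $t^*\in[0,t]$ is a point at which $\|\rho\|$ attains its maximum on $[0,t]$. This gives
\begin{equation*}
\tfrac12\|\rho(t^*)\|^2-\tfrac12\|\rho(0)\|^2+\int_0^{t^*}\!\!\int_0^{s}\mathcal{K}(s-\tau)a(\rho(\tau),\rho(s))\,d\tau\,ds=\int_0^{t^*}\langle\epsilon_t+f_h-f,\rho\rangle\,ds.
\end{equation*}
The double integral is nonnegative by \eqref{PK}. Because $a$ is symmetric and positive definite ($\mathbf{A}$ is symmetric, since $A$ is self-adjoint), this follows either by extending \eqref{PK} to $H_0^1$-valued functions through an eigenfunction expansion of $A$ (or by writing $a(\cdot,\cdot)=\langle\mathbf{A}^{1/2}\nabla\cdot,\mathbf{A}^{1/2}\nabla\cdot\rangle$ componentwise and applying \eqref{PK} pointwise in $x$), or by a density argument from continuous $\phi$ to $L^2$-in-time functions. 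I expect this justification, together with the regularity needed to test with $\rho$, to be the main technical point. The weakly singular case requires only local integrability of $\mathcal{K}$.

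Finally, drop the nonnegative memory term and apply Cauchy--Schwarz together with $\|\rho(s)\|\le\|\rho(t^*)\|$:
\begin{equation*}
\tfrac12\|\rho(t^*)\|^2\le\tfrac12\|\rho(0)\|^2+\|\rho(t^*)\|\int_0^{t}\big(\|\epsilon_t\|+\|f_h-f\|\big)\,ds.
\end{equation*}
Set $M=\|\rho(t^*)\|$ and $B=\int_0^t(\|\epsilon_t\|+\|f_h-f\|)\,ds$. The inequality $M^2\le\|\rho(0)\|^2+2MB$ gives $M\le B+\sqrt{B^2+\|\rho(0)\|^2}\le\|\rho(0)\|+2B$. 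Since $\|\rho(t)\|\le M$, inserting $\|\epsilon_t\|\le\Theta(u_{h,t})$ from Lemma~\ref{l4.2} \eqref{T-RE} yields the stated bound. No Gronwall argument is used, so the constants are independent of $T$ and of $\mathcal{K}$.
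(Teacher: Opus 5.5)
Your proposal is correct and follows the paper's proof: you derive the same error equation $\langle\rho_t,\phi\rangle+\int_0^t\mathcal{K}(t-s)a(\rho(s),\phi)\,ds=\langle\epsilon_t+f_h-f,\phi\rangle$, test with $\rho(t)$, drop the memory term via \eqref{PK}, and argue at a point where $\|\rho\|$ attains its maximum. The only differences are cosmetic: you solve $M^2\le\|\rho(0)\|^2+2MB$ explicitly where the paper uses $\|\rho(0)\|^2\le\|\rho(0)\|\,\|\rho(t_0)\|$ and divides (both give the same bound), and your remark on extending \eqref{PK} to $H_0^1$-valued functions justifies a step the paper takes for granted.
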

\begin{proof} 
  In view of \eqref{WF}, for all $\phi \in H_{0}^{1}(\Omega)$, we obtain
 \begin{align*}
       \langle\rho_t,\phi \rangle+\int_{0}^{t} \mathcal{K}(t-s) a(\rho(s),\phi) ds 
      =& \langle\mathcal{R}_hu_{h,t},\phi\rangle+\int_{0}^{t} \mathcal{K}(t-s) a(\mathcal{R}_hu_h(s),\phi) ds-\langle f,\phi\rangle.
 \end{align*}
Using \eqref{DF_S} and \eqref{ER}, we arrive at 
\begin{align}\label{ereq}
  \langle\rho_t,\phi \rangle+\int_{0}^{t} \mathcal{K}(t-s) a(\rho(s),\phi) ds 
  &=\langle\epsilon_t,\phi \rangle+\langle f_h-f,\phi \rangle.
\end{align}
Set $\phi=\rho(t)$ in \eqref{ereq} and use the Cauchy--Schwarz inequality to obtain
\begin{align} \label{4.14}
   \frac{1}{2}\frac{d}{dt}\norm{\rho(t)}^2 + \int_{0}^{t} \mathcal{K}(t-s)a(\rho(s),\rho(t))ds 
   &\leq (\norm{\epsilon_t}+\norm{f_h-f})\norm{\rho(t)}.
\end{align}
Now, integrate \eqref{4.14} from $0$ to $t$ and use \eqref{PK} to find that
\begin{equation*}
    \norm{\rho(t)}^2 \leq \norm{\rho(0)}^2 + 2\int_{0}^{t}\Big(\norm{\epsilon_t(s)}+\norm{(f_h-f)(s)}\Big)\norm{\rho(s)}ds.
\end{equation*}
Letting $t_0$ be such that $\norm{\rho(t_0)}=\sup_{s\in [0,t]}\norm{\rho(s)}$, we conclude 
\begin{equation*}
    \norm{\rho(t)}^2 \leq \norm{\rho(t_0)}^2 \leq  \bigg\{\norm{\rho(0)} + 2\int_{0}^{t}\bigg(\norm{\epsilon_t(s)}+\norm{(f_h-f)(s)}\bigg)ds\bigg\}\norm{\rho(t_0)},
\end{equation*}
and hence,
\begin{equation*}
    \norm{\rho(t)} \leq \norm{\rho(0)} + 2\int_{0}^{t}\bigg(\norm{\epsilon_t(s)}+\norm{(f_h-f)(s)}\bigg)ds.
\end{equation*}
Applying the estimate for \(\norm{\epsilon_t(s)}\) in Lemma \ref{l4.2}, we obtain the desired result. This completes the proof.
\end{proof}

Below, we state the main theorem of this section.
\begin{thm}[Semidiscrete $L^{\infty}(L^2)$-norm \emph{a posteriori} error estimate]\label{Thm:3.1}
 Let $u$ and $u_h$ satisfy \eqref{ee} and \eqref{SFEA}, respectively. Then, the following a posteriori error bound holds for $0\leq t\leq T:$
 \begin{equation}
      \norm{(u-u_h)(t)}\leq \norm{u(0)-u_h(0)} + \Theta(u_h(0)) + \Theta(u_h(t)) + 2 \int_{0}^{t}\Big[\Theta(u_{h,t}(s))+ \norm{(f_h-f)(s)}\Big]ds.
 \end{equation}
 \end{thm}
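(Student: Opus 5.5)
The proof combines the error splitting \eqref{splite} with Lemmas~\ref{l4.2} and~\ref{l4.4}. Since $e=u_h-u=\rho-\epsilon$, the triangle inequality gives, for every $t\in[0,T]$,
\begin{equation*}
\norm{(u-u_h)(t)}\le \norm{\rho(t)}+\norm{\epsilon(t)}.
\end{equation*}
The reconstruction error is bounded directly by Lemma~\ref{l4.2}: $\norm{\epsilon(t)}\le\Theta(u_h(t))$. This accounts for the term $\Theta(u_h(t))$ in the statement.

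For $\norm{\rho(t)}$ we invoke Lemma~\ref{l4.4}, which already contains the energy argument based on the positive definiteness \eqref{PK} and avoids Gronwall's lemma. It gives
\begin{equation*}
\norm{\rho(t)}\le\norm{\rho(0)}+2\int_0^t\Big[\Theta(u_{h,t}(s))+\norm{(f_h-f)(s)}\Big]ds.
\end{equation*}

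It remains to express $\norm{\rho(0)}$ in terms of computable or initial-data quantities. We write
\begin{equation*}
\rho(0)=\mathcal{R}_hu_h(0)-u(0)=\big(\mathcal{R}_hu_h(0)-u_h(0)\big)+\big(u_h(0)-u(0)\big)=\epsilon(0)+\big(u_h(0)-u(0)\big).
\end{equation*}
The triangle inequality together with Lemma~\ref{l4.2} at $t=0$ then yields
\begin{equation*}
\norm{\rho(0)}\le\norm{u(0)-u_h(0)}+\Theta(u_h(0)).
\end{equation*}
Combining the three bounds gives exactly the estimate of Theorem~\ref{Thm:3.1}.

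The only step needing care is the use of Lemma~\ref{l4.2} at $t=0$. This requires $u_h(0)=\mathcal{P}_hu_0\in\mathbb{V}_h$, which holds by the definition of the scheme \eqref{SFEA}, so the lemma applies. All the substantive analytic work, namely the energy estimate using \eqref{PK} and the sup-in-time argument, has already been carried out in Lemma~\ref{l4.4}. The proof of the theorem is therefore a short assembly of the earlier results with no real obstacle.
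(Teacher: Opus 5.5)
Your proposal is correct and follows the paper's own proof: you use the splitting $e=\rho-\epsilon$, Lemma~\ref{l4.2} for $\|\epsilon(t)\|$, and Lemma~\ref{l4.4} for $\|\rho(t)\|$. You also make explicit the step the paper leaves implicit, namely $\|\rho(0)\|\le\|u(0)-u_h(0)\|+\Theta(u_h(0))$ via $\rho(0)=\epsilon(0)+(u_h(0)-u(0))$, which is exactly where the terms $\|u(0)-u_h(0)\|$ and $\Theta(u_h(0))$ in the statement come from.
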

\begin{proof}  Decompose the error $e$ as in \eqref{splite} and use Lemmas \ref{l4.2} and \ref{l4.4} to conclude the rest of the proof.
\end{proof}
\noindent
Note that the final estimate in Theorem~\ref{Thm:3.1} is derived without invoking Gronwall's lemma, and the constants appearing in the estimate are independent of the final time $T$, the mesh parameters, and the kernel.
 \section{Analysis for the fully discrete scheme}\label{sec:4}
This section focuses on the fully discrete scheme based on the BE method.

For the temporal discretization, divide the interval $[0, T]$ into $N$ smaller  subintervals $I_n = (t_{n-1}, t_n]$, $n = 1, 2, \ldots, N$, in such a way that $t_0 = 0$ and $t_N = T$. The corresponding time step sizes are denoted by $\tau_n = t_n - t_{n-1}$. 
For convenience, we set $f^n(x) = f(x, t_n)$ to represent the function $f$ evaluated at time $t_n$. 

At each time level $t_n$, we associate a conforming, shape-regular triangulation $\mathcal{T}_n$ of $\Omega$, and define its elementwise mesh-size function \(h_n\) by
$$
h_n(x)=\operatorname{diam}(K),\;\;x\in K,\;\; K\in\mathcal{T}_n.$$
We now define the corresponding finite element space
\[
    \mathbb{V}^n = \left\{ \phi \in H_0^{1}(\Omega) : \phi|_K \in \mathbb{P}_1(K) \; \;\forall K \in \mathcal{T}_n \right\}.
\]
\noindent
Let \(\mathcal E_n\) represent the collection of interior sides of \(\mathcal T_n\), i.e., the edges in two dimensions or faces in three dimensions that lie in the interior of \(\Omega\). The union of all interior sides of \(\mathcal T_n\) is denoted by \(\Sigma_n\), i.e., $\Sigma_n:=\bigcup_{E\in\mathcal{E}_n}E$.
We assume that consecutive triangulations $\mathcal{T}_{n-1}$ and
$\mathcal{T}_n$ are compatible refinements of a conforming, uniformly
shape-regular common coarsening $\widehat{\mathcal{T}}_n$ such that
\[
 \mathbb{V}(\widehat{\mathcal{T}}_n)
 =\mathbb{V}^{n-1}\cap\mathbb{V}^{n},
\]
where $\mathbb{V}(\mathcal{T})$ denotes the continuous $\mathbb{P}_1$ finite element space on a triangulation $\mathcal{T}$.
For each $\widehat K\in\widehat{\mathcal{T}}_n$, put
$H_{\widehat K}:=\operatorname{diam}(\widehat K)$ and define
\begin{equation}\label{eq:common-mesh-size}
 \widehat h_n|_{\widehat K}:=H_{\widehat K}.
\end{equation}
 Let $\widehat\Sigma_n$ be the interior skeleton of $\widehat{\mathcal{T}}_n$, and let
\[
 \breve\Sigma_n:=\Sigma_{n-1}\cup\Sigma_n,
\]
where the union is understood as a geometric set. Then $\widehat\Sigma_n\subset\Sigma_{n-1}\cap\Sigma_n$, and the remaining sides
$\breve\Sigma_n\setminus\widehat\Sigma_n$ lie in the interiors of
common-coarsening elements.

On sides we set $\widehat h_n|_E:=\max\{H_{\widehat K}:E\subset\partial\widehat K\}$ for $E\subset\widehat\Sigma_n$, and $\widehat h_n|_E:=H_{\widehat K}$ for $E\subset\breve\Sigma_n\setminus\widehat\Sigma_n$, where $\widehat K\in\widehat{\mathcal{T}}_n$ is the parent containing $E$.
We further assume that there exists $L\ge1$, fixed independently of $n$
and of the individual mesh sizes, for which
\begin{equation}\label{eq:mesh-ratio}
 \frac{H_{\widehat K}}{h_K}\le L
 \quad\text{for every }\widehat K\in\widehat{\mathcal{T}}_n
 \text{ and every }K\subseteq\widehat K
 \text{ in }\mathcal{T}_{n-1}\cup\mathcal{T}_n,
\end{equation}
where $h_K=\operatorname{diam}(K)$.

In practice, if consecutive meshes are compatible and their relative
refinement depth is uniformly bounded, condition~\eqref{eq:mesh-ratio}
holds with a fixed $L$ for standard shape-regular refinement rules.
It follows that the additional-side trace estimate used in
Lemma~\ref{spatial} has a uniform constant $C_3$; this constant may
depend on $L$, but not on individual mesh sizes, time steps or the
memory kernel.

Let $q_n$ denote the positive type quadrature rule \cite{MT93} used to discretize the integral term, i.e.,
\begin{equation} 
    q_n(y)=\sum_{j=1}^{n} W_{nj}y^j \approx \int_{0}^{t_n}\mathcal{K}(t_n-s)y(s) ds, \label{quad1}
\end{equation}
where
\begin{equation}\label{Wnj}
W_{nj} = 
\begin{cases}
\tau_j \, \mathcal{K}(t_n - t_j) & \quad \text{for smooth kernels,} \\
\displaystyle \int_{t_{j-1}}^{t_j} \mathcal{K}(t_n - s)\, ds & \quad \text{for weakly singular kernel.} 
\end{cases}
\end{equation}
Note that for the weakly singular case, we use the piecewise-constant interpolant taking the value $y^j=y(t_j)$ in $(t_{j-1},t_j]$. 

For a given initial value $U^0 = \mathcal{P}_0^0 u(0)$, the fully discrete BE approximation is to 
find $U^n \in \mathbb{V}^n$, $n = 1, 2, \ldots, N$, such that  
\begin{equation} \label{FDS}
    \tau_n^{-1} \langle U^n - U^{n-1}, \phi_n \rangle 
    + q_n\!\big(a(U, \phi_n)\big) 
    = \langle f^n, \phi_n \rangle 
    \qquad \forall \phi_n \in \mathbb{V}^n,
\end{equation}
where $\mathcal{P}_0^n$ denotes the $L^2$-projection operator onto $\mathbb{V}^n$.

Furthermore, for $v \in \mathbb{V}^n$, the bilinear form $a(\cdot, \cdot)$ retains the same structure as in \eqref{3.2},  
\begin{equation}
    a(v, \phi) = \langle A_{el} v, \phi \rangle 
    + \langle J[v], \phi \rangle_{\Sigma_n} 
    \qquad \forall \phi \in H_0^{1}(\Omega),
\end{equation}
where $A_{el}$ and $J[v]$ are defined in \eqref{Ael} and \eqref{J_v}, respectively.

For the fully discrete error analysis, the elliptic reconstruction operator from \eqref{ER} is reformulated as  
\begin{equation} \label{ER(T)}
    a(\mathcal{R}^n v, \phi) = \langle A^n v, \phi \rangle 
    \qquad \forall \phi \in H_0^{1}(\Omega),
\end{equation}
for each $v \in \mathbb{V}^n$, where the discrete operator 
$A^n : \mathbb{V}^n \to \mathbb{V}^n$ is defined through  
\begin{equation} \label{DEOn}
    \langle A^n v, \phi_n \rangle = a(v, \phi_n) 
    \qquad \forall \phi_n \in \mathbb{V}^n.
\end{equation}
 We further introduce the elliptic projection operator $\mathcal{P}_1^n:H_0^1(\Omega) \rightarrow \mathbb{V}^n$ by 
\begin{equation}\label{EllProj}
    a(\mathcal{P}_1^n v, \phi_n) = a(v, \phi_n)
    \qquad \forall \phi_n \in \mathbb{V}^n.
\end{equation}

For each time interval $I_n$, we define a continuous-in-time approximation $U(t)$ 
that linearly interpolates the BE approximations $U^{n-1}$ and $U^n$:
\begin{equation} \label{U}
    U(t) := l_{n-1}(t) U^{n-1} + l_n(t) U^{n}, 
    \qquad \text{for } n = 1, \ldots, N,
\end{equation}
where, on $I_n$, the coefficients $l_{n-1}(t)$ and $l_n(t)$ are defined by
\begin{equation} \label{hat}
    l_{n-1}(t) = \frac{t_n - t}{\tau_n}, 
    \qquad 
    l_n(t) = \frac{t - t_{n-1}}{\tau_n}.
\end{equation}
Using this interpolation, we construct a time-dependent discrete elliptic reconstruction 
$\omega(t)$, given by
\begin{equation}\label{omega}
    \omega(t) := l_{n-1}(t)\mathcal{R}^{n-1}U^{n-1} 
    + l_n(t)\mathcal{R}^nU^n, 
    \qquad \text{for } \;t\in I_n \;\;\;\mbox{and}\;\;n = 1, \ldots, N,
\end{equation}
where $\mathcal{R}^n$ denotes the elliptic reconstruction operator defined in \eqref{ER(T)}. 

To facilitate the analysis that follows, we define the discrete temporal derivatives and their projections. 
For $n = 1, \ldots, N$, set
\begin{equation*}
    \partial U^n := \frac{U^n - U^{n-1}}{\tau_n}, 
    \qquad 
    \bar{\partial} U^n := \mathcal{P}_0^n \partial U^n
    = \frac{U^n - \mathcal{P}_0^n U^{n-1}}{\tau_n}.
\end{equation*}
Furthermore, the projected forcing term is given by
\begin{equation*}
    \bar{f}^n := \mathcal{P}_0^n f^n, \qquad n = 0, 1, \ldots, N.
\end{equation*}

 For any $\phi \in H_0^1(\Omega) $, 
by employing the $L^2$-projection, elliptic projection and the discrete operator,  together with the fully discrete scheme \eqref{FDS}, we obtain
\begin{align*}
   \langle \bar{\partial} U^n + q_n(A^n\mathcal{P}_1^n U)-\bar{f}^n, \phi\rangle&=\langle \bar{\partial} U^n + q_n(A^n\mathcal{P}_1^n U)-\bar{f}^n, \mathcal{P}_0^n\phi\rangle \\
   &=\langle \bar{\partial} U^n, \mathcal{P}_0^n\phi\rangle+ \langle q_n(A^n\mathcal{P}_1^n U), \mathcal{P}_0^n\phi\rangle-\langle \bar{f}^n, \mathcal{P}_0^n\phi\rangle \\
   &=\tau_n^{-1}\langle U^n - \mathcal{P}_0^n U^{n-1},\mathcal{P}_0^n\phi\rangle+q_n(\langle A^n\mathcal{P}_1^n U,\mathcal{P}_0^n\phi\rangle)- \langle \bar{f}^n, \mathcal{P}_0^n\phi\rangle \\
   &=\tau_n^{-1}\langle U^n -U^{n-1},\mathcal{P}_0^n\phi\rangle+q_n(a(\mathcal{P}_1^nU,\mathcal{P}_0^n\phi))-\langle f^n, \mathcal{P}_0^n\phi\rangle \\
   &=\tau_n^{-1}\langle U^n -U^{n-1},\mathcal{P}_0^n\phi\rangle+q_n(a(U,\mathcal{P}_0^n\phi))-\langle f^n, \mathcal{P}_0^n\phi\rangle \\ 
   &=0
\end{align*}
Thus, the fully discrete scheme \eqref{FDS} can be reformulated in the distributional form
\begin{equation} \label{DF}
    \bar{\partial} U^n + q_n(A^n\mathcal{P}_1^n U(x)) = \bar{f}^n(x), 
    \qquad \forall x \in \Omega.
\end{equation}

\begin{definition}[Error estimators]\label{def:est}
For $n=1,2,\cdots,N$ (and, for $\eta_n^{\mathrm{rec}}$, also for $n=0$), we introduce the following error estimators.\\
\medskip
\noindent\textbf{Elliptic reconstruction error estimator.}
We define the elliptic reconstruction error estimator by
\begin{equation}\label{rec}
\eta_n^{\mathrm{rec}}
:=
C_0\left(
\sum_{K\in\mathcal{T}_n}
h_K^4 \|R^n\|_K^2
+
\sum_{E\in\mathcal{E}_n}
h_E^3 \|J^n\|_E^2
\right)^{1/2},
\end{equation}
where  $R^n=A_{el}U^n-A^nU^n$ and $J^n=J[U^n]$, the inner residual 
and jump residual, respectively, and the constant $C_0$ depends only on $\Omega$ and the shape-regularity of \(\mathcal {T}_n\).

 \medskip 
 \noindent\textbf{Spatial error estimator.}
The spatial  error estimator is defined by
\begin{equation}\label{eta_S}
    \eta_n^S:=C_1\norm{\widehat{h}_n^{\,2}\partial R^n} + C_2\norm{\widehat{h}_n^{\,3/2}\partial J^n}_{\widehat{\Sigma}_n}+C_3\left\|\widehat{h}_n^{\,3/2}\partial J^n\right\|_{\breve{\Sigma}_n\setminus\widehat{\Sigma}_n}. 
\end{equation}
Here $\partial R^n:=(R^n-R^{n-1})/\tau_n$ and $\partial J^n:=(J^n-J^{n-1})/\tau_n$. The constants $C_1,C_2,C_3$ in $\eta_n^S$ 
depend only on the elliptic regularity constant and the shape-regularity of the meshes.

\medskip
\noindent\textbf{Temporal error estimator.}
The time discretization contribution is quantified by  
 \begin{equation}\label{lambda}
   \eta_n^T :=\frac{1}{\tau_n}\int^{t_n}_{t_{n-1}}\Big(\norm{\bar{f}^n - f^n}+ \norm{f^n-f(t)}\Big)~dt.
\end{equation}
\medskip
\noindent\textbf{Quadrature error estimator.}
The error induced by numerical quadrature is bounded by
\begin{align}\label{Qn}
 \eta_n^{Q}&:=
 \begin{cases}
      C_{Q}\sum_{j=1}^n \tau_j^2 \bigg(\norm{A^n\mathcal{P}_1^nU^{j-1}}+\norm{A^n\mathcal{P}_1^nU^{j}}+\norm{A^n\mathcal{P}_1^n \partial U^j}\bigg)  &~\text{ for smooth kernel},\\
     \sum_{j=1}^{n}\tau_j\,W_{nj}\, \norm{A^n\mathcal{P}_1^n\partial U^j}&~\text{ for weakly singular kernel},
      \end{cases}
\end{align}
 where $C_Q:=\max\{\gamma,\gamma_1\}$ is a computable kernel quantity and $W_{nj}$ are the weights \eqref{Wnj}.\\
\noindent\textbf{Memory consistency error estimator.} The error associated with the memory-lag and history mesh-change approximations is quantified by
\begin{equation}\label{eta_Q,2}
\eta_n^{K}:=\eta_n^L+\eta_n^H,
\end{equation}
where $\eta_n^L$ is the \emph{memory-lag estimator} and $\eta_n^H$ is the \emph{history mesh-change estimator}, given by
\begin{align}
\eta_n^L&:=\tau_n\cK_1(t_n)\max_{1\leq j\leq n}\big\|A^n\cP_1^n\partial U^j\big\|+\big(\cK_1(t_n)-\cK_1(t_{n-1})\big)\max_{0\leq j\leq n}\big\|A^n\cP_1^nU^j\big\|,\label{eq:etaL}\\
\eta_n^H&:=\cK_1(t_n)\max_{0\leq j\leq n}\big\|A^jU^j-A^n\cP_1^nU^j\big\|. \label{eq:etaH}
\end{align}
\medskip
\noindent\textbf{Mesh change error estimator.}
The contribution arising from mesh changes is given by
\begin{equation}\label{mu}
   \eta_n^M:=\norm{(\mathcal{P}_0^n-I)\tau_n^{-1}U^{n-1}}.
\end{equation}
\end{definition} 

We now derive the \emph{a posteriori} error bounds for the fully discrete BE scheme by decomposing the total error \(e = U-u \) as
\begin{equation}
    e = \rho - \epsilon,
\end{equation}
where \(\rho = \omega-u \) denotes the temporal error, and \(\epsilon = \omega-U \) denotes the elliptic reconstruction error.

\begin{lemma}[Elliptic reconstruction error estimate]\label{lERec}
The following error bound holds for the reconstruction error:
  \begin{equation*}
      \norm{\mathcal{R}^nU^n-U^n} \leq \eta_n^{rec} ,
  \end{equation*}
 where $\eta_n^{rec}$ is given by \eqref{rec}.
\end{lemma}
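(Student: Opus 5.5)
The plan is a standard Aubin--Nitsche duality argument at the fixed time level $t_n$, following the structure of \cite[Lemma~2.3]{LM06} already invoked for Lemma~\ref{l4.2}. Set $\varepsilon^n:=\mathcal{R}^nU^n-U^n\in H_0^1(\Omega)$. Let $z\in H_0^1(\Omega)$ solve the dual problem $a(\phi,z)=\langle \varepsilon^n,\phi\rangle$ for all $\phi\in H_0^1(\Omega)$. By the $H^2$-regularity \eqref{eq:elliptic-H2}, $|z|_2\le C_{\mathrm{ell}}\|\varepsilon^n\|$. Testing with $\phi=\varepsilon^n$ gives
\[
\|\varepsilon^n\|^2=a(\varepsilon^n,z)=a(\varepsilon^n,z-\Pi_n z),
\]
where $\Pi_n$ is the Scott--Zhang interpolant onto $\mathbb{V}^n$. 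This step uses the Galerkin orthogonality $a(\mathcal{R}^nU^n-U^n,\phi_n)=0$ for $\phi_n\in\mathbb{V}^n$, which follows from \eqref{ER(T)} and \eqref{DEOn} exactly as in \eqref{GO}.

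Next, expand the right-hand side using the definition of $\mathcal{R}^n$ and the elementwise integration-by-parts representation of $a(U^n,\cdot)$ (the analogue of \eqref{3.2} on $\mathcal{T}_n$):
\[
a(\varepsilon^n,\psi)=\langle A^nU^n,\psi\rangle-\langle A_{el}U^n,\psi\rangle-\langle J[U^n],\psi\rangle_{\Sigma_n}
=-\langle R^n,\psi\rangle-\langle J^n,\psi\rangle_{\Sigma_n},
\]
with $\psi=z-\Pi_nz$. Apply Cauchy--Schwarz elementwise and sidewise, inserting the weights $h_K^{2}h_K^{-2}$ and $h_E^{3/2}h_E^{-3/2}$. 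Then use Lemma~\ref{l4.1} with $j=2$, localized, i.e. $\sum_K h_K^{-4}\|z-\Pi_nz\|_K^2\lesssim |z|_2^2$ and $\sum_E h_E^{-3}\|z-\Pi_nz\|_E^2\lesssim|z|_2^2$. The second of these follows from the local trace inequality together with shape regularity, which gives $h_E\sim h_K$ for neighbouring elements. Combining, one gets
\[
\|\varepsilon^n\|^2\le C\Big(\sum_K h_K^4\|R^n\|_K^2+\sum_E h_E^3\|J^n\|_E^2\Big)^{1/2}|z|_2,
\]
and dividing by $\|\varepsilon^n\|$ after applying the regularity bound yields the claim with $C_0=C\,C_{\mathrm{ell}}$.

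The only delicate point is bookkeeping the local (elementwise and edgewise) versions of the interpolation estimates, so that the squared-sum form in \eqref{rec} appears rather than the global weighted norms of Lemma~\ref{l4.1}. For Scott--Zhang this relies on the patch-wise estimates and the finite overlap of patches under shape regularity. This is also why $C_0$ depends only on $\Omega$, through $C_{\mathrm{ell}}$, and on shape regularity, and is uniform in $n$. The case $n=0$ is identical with $\mathcal{T}_0$.
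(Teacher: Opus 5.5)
Your proposal is correct and takes essentially the same route as the paper. The paper omits the proof and defers to the duality argument of \cite[Lemma~2.3]{LM06}, which uses exactly your ingredients: Galerkin orthogonality of $\mathcal{R}^n$, Scott--Zhang interpolation, the elementwise integration-by-parts representation of $a(U^n,\cdot)$, and the $H^2$-regularity \eqref{eq:elliptic-H2}. The ``localization'' you flag as delicate is already contained in Lemma~\ref{l4.1}, since its weighted norms with the elementwise mesh-size function are precisely $\sum_K h_K^{-4}\|z-\Pi_n z\|_K^2$ and, using $h_E\sim h_K$, $\sum_E h_E^{-3}\|z-\Pi_n z\|_E^2$.
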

\begin{proof} The result mirrors  \cite[Lemma 2.3]{LM06}, and the details of the proof are therefore omitted for conciseness. 
\end{proof} 

We now proceed to derive the error estimate for \(\rho\). The starting point of our analysis is the following error equation satisfied by \(\rho\).
  \begin{lemma}[Error equation for \(\rho\)]
For each $t \in I_n$ and $\phi \in H_0^1(\Omega)$, the error equation is given by
\begin{equation}\label{PEq}
  \begin{split}
      \langle \rho_t,\phi \rangle &+ \int_0^t \mathcal{K}(t-s)a(\rho(s),\phi)~ds = \langle\epsilon_t,\phi \rangle + \langle \bar{f}^n-f,\phi \rangle + \langle  \tau_n^{-1} (\mathcal{P}_0^nU^{n-1}-U^{n-1}),\phi \rangle  \\
     &\quad \qquad + 
     \bigg< \int_{0}^{t_n} \mathcal{K}(t_n-s)A^n\mathcal{P}_1^nU(s)~ds-q_n(A^n\mathcal{P}_1^nU),\phi \bigg>\\
     &\quad \qquad +\int_{0}^{t} \mathcal{K}(t-s)a(\omega(s),\phi)~ds-\int_{0}^{t_n} \mathcal{K}(t_n-s)\langle A^n\mathcal{P}_1^nU(s),\phi \rangle~ds.\\
\end{split}
  \end{equation}
\end{lemma}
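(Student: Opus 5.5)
The identity is purely algebraic. The plan is to expand the left-hand side using $\rho=\omega-u$, eliminate $u$ through the weak formulation \eqref{WF}, and then insert the distributional form \eqref{DF} of the scheme tested against $\phi$.

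\textbf{Step 1: eliminate $u$.} Fix $t\in I_n$ and $\phi\in H_0^1(\Omega)$. Since $\rho_t=\omega_t-u_t$, we have
\[
\langle\rho_t,\phi\rangle+\int_0^t\mathcal K(t-s)a(\rho(s),\phi)\,ds
=\langle\omega_t,\phi\rangle+\int_0^t\mathcal K(t-s)a(\omega(s),\phi)\,ds-\Big[\langle u_t,\phi\rangle+\int_0^t\mathcal K(t-s)a(u(s),\phi)\,ds\Big].
\]
By \eqref{WF}, the bracket equals $\langle f(t),\phi\rangle$. We keep the memory term of $\omega$ untouched, since it appears verbatim on the right of \eqref{PEq}.

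\textbf{Step 2: rewrite $\omega_t$.} Write $\omega_t=\epsilon_t+U_t$, where $\epsilon=\omega-U$. On $I_n$ the interpolant is linear, so $U_t=\partial U^n=\tau_n^{-1}(U^n-U^{n-1})$. Then split
\[
\partial U^n=\bar\partial U^n+\tau_n^{-1}(\mathcal P_0^nU^{n-1}-U^{n-1}),
\]
using the definition $\bar\partial U^n=\tau_n^{-1}(U^n-\mathcal P_0^nU^{n-1})$. The second piece is exactly the mesh-change term in \eqref{PEq}.

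\textbf{Step 3: use the scheme.} By \eqref{DF}, $\bar\partial U^n=\bar f^n-q_n(A^n\mathcal P_1^nU)$ pointwise, hence also after testing with any $\phi\in H_0^1(\Omega)$. Next add and subtract $\int_0^{t_n}\mathcal K(t_n-s)\langle A^n\mathcal P_1^nU(s),\phi\rangle\,ds$. This gives
\[
\langle\bar\partial U^n,\phi\rangle
=\langle\bar f^n,\phi\rangle
+\Big\langle\int_0^{t_n}\mathcal K(t_n-s)A^n\mathcal P_1^nU(s)\,ds-q_n(A^n\mathcal P_1^nU),\phi\Big\rangle
-\int_0^{t_n}\mathcal K(t_n-s)\langle A^n\mathcal P_1^nU(s),\phi\rangle\,ds.
\]
Combining this with $-\langle f,\phi\rangle$ from Step 1 produces $\langle\bar f^n-f,\phi\rangle$. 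Collecting all terms yields \eqref{PEq} exactly.

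\textbf{Main obstacle.} The only delicate point is justifying that $\int_0^{t_n}\mathcal K(t_n-s)A^n\mathcal P_1^nU(s)\,ds$ is well defined, and that the inner product commutes with the time integral.
\begin{itemize}
\item For $s\in I_j$, $U(s)$ is a convex combination of $U^{j-1}$ and $U^j$. Hence $A^n\mathcal P_1^nU(s)\in\mathbb V^n$ is piecewise linear in $s$ with values in a finite-dimensional space.
\item $\mathcal K\in L^1(0,T)$ in both the smooth and the weakly singular cases.
\end{itemize}
The integral therefore exists as a Bochner integral, and Fubini allows the interchange with $\langle\cdot,\phi\rangle$. The identity also needs $\omega(s)\in H_0^1(\Omega)$, so that $a(\omega(s),\phi)$ makes sense; this holds because each $\mathcal R^jU^j\in H_0^1(\Omega)$.
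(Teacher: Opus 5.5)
Your proposal is correct and is essentially the paper's argument: eliminate $u$ via \eqref{WF}, use $\omega_t=\epsilon_t+\partial U^n$ on $I_n$ with $\partial U^n=\bar\partial U^n+\tau_n^{-1}(\mathcal{P}_0^nU^{n-1}-U^{n-1})$, substitute \eqref{DF}, and add and subtract $\int_0^{t_n}\mathcal{K}(t_n-s)\langle A^n\mathcal{P}_1^nU(s),\phi\rangle\,ds$. The only difference is bookkeeping: the paper rewrites $-\langle f,\phi\rangle$ rather than $\omega_t$. Your remarks on the well-definedness of the memory integral are a harmless addition that the paper leaves implicit; note only that, like \eqref{WF}, the identity holds for a.e.\ $t\in I_n$.
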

\begin{proof} For $t\in I_n$ and $\phi \in H_0^1(\Omega)$, using \eqref{WF}  and \eqref{DF}, we now obtain
\begin{align*}
     \langle \rho_t,\phi \rangle + \int_0^t \mathcal{K}(t-s)a(\rho(s),\phi)~ds &= \langle \omega_t,\phi \rangle +  \int_0^t \mathcal{K}(t-s)a(\omega(s),\phi)~ds - \langle f,\phi \rangle \\
     &= \langle\omega_t,\phi \rangle + \int_0^t \mathcal{K}(t-s)a(\omega(s),\phi)~ds- \langle \partial U^n, \phi \rangle \\
     &\qquad + \langle \tau_n^{-1} (\mathcal{P}_0^nU^{n-1}-U^{n-1}),\phi \rangle + \langle \bar{f}^n-f,\phi \rangle - \langle q_n(A^n\mathcal{P}_1^nU),\phi \rangle \\
     &= \langle\epsilon_t,\phi \rangle +   \langle \bar{f}^n-f,\phi \rangle  +  \langle \tau_n^{-1}(\mathcal{P}_0^nU^{n-1}-U^{n-1}),\phi \rangle \\
     &\qquad + 
     \bigg< \int_{0}^{t_n} \mathcal{K}(t_n-s)A^n\mathcal{P}_1^nU(s)~ds-q_n(A^n\mathcal{P}_1^nU),\phi \bigg>\\
     &\qquad  +  \int_0^t \mathcal{K}(t-s)a(\omega(s),\phi)~ds-\int_{0}^{t_n} \mathcal{K}(t_n-s)\langle A^n\mathcal{P}_1^nU(s),\phi \rangle~ds.
\end{align*}
This concludes the proof.
\end{proof}

The next lemma yields a bound on the temporal error $\rho(t)$.
\begin{lemma}[Bound for \(\rho\)]\label{l(PE)}
The following bound holds for \(\rho\):
    \begin{equation}
       \norm{\rho_m^*}^2\leq \norm{\rho(0)}^2 + 2I_m,
    \end{equation}
where 
\begin{equation*}
    \norm{\rho(t_m^*)}=\norm{\rho_m^*}:= \underset{0 \leq t \leq t_m}{\max} \norm{\rho(t)},~~~~~ t_m^*\in~[0,t_m]
\end{equation*}
and
\begin{equation*}
    I_m:=\sum_{n=1}^m(I_1^n+I_2^n+I_3^n+I_4^n+I_5^n)
\end{equation*}
with $I_1^n$ denoting the spatial error which is given by
\begin{equation*}
    I_1^n:=\int_{t_{n-1}}^{t_n} |\langle\epsilon_t(t),\rho(t) \rangle|~dt,
\end{equation*}
$I_2^n$ denotes the time discretization error and is given by
 \begin{equation*}
     I_2^n:=\int_{t_{n-1}}^{t_n} |\langle \bar{f}^n-f(t), \rho(t)\rangle|~dt\, 
 \end{equation*}
 $I_3^n$ denotes the quadrature error and is given by
 \begin{align*}
     I_3^n&:=\int_{t_{n-1}}^{t_n} \bigg| \bigg< \int_{0}^{t_n} \mathcal{K}(t_n-s)A^n\mathcal{P}_1^nU(s)~ds-q_n(A^n\mathcal{P}_1^nU),\rho(t)\bigg>\bigg|~dt
 \end{align*}
 $I_4^n$ denotes the memory consistency error and is given by 
\begin{align*}
    I_4^n&:=\int_{t_{n-1}}^{t_n} \bigg| \int_{0}^{t} \mathcal{K}(t-s)a(\omega(s),\rho(t))~ds-\int_{0}^{t_n} \mathcal{K}(t_n-s)\langle A^n\mathcal{P}_1^nU(s),\rho(t) \rangle~ds  \bigg|~dt
\end{align*}
and $I_5^n$ denotes the mesh-change error and is given by
 \begin{equation*}
     I_5^n:= \int_{t_{n-1}}^{t_n} |\langle \tau_n^{-1}(\mathcal{P}_0^nU^{n-1}-U^{n-1}), \rho(t) \rangle |~dt.
 \end{equation*}
\end{lemma}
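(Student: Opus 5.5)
The plan mirrors the energy argument of Lemma~\ref{l4.4}, now run with the error equation \eqref{PEq}. For each $n\le m$ and $t\in I_n$ we take $\phi=\rho(t)$ in \eqref{PEq}. This is admissible because $\rho(t)=\omega(t)-u(t)\in H_0^1(\Omega)$. Since $\omega$ is piecewise linear in time with values in $H_0^1(\Omega)$, $\rho$ is absolutely continuous in time with values in $L^2$, and
\[
\langle\rho_t,\rho\rangle=\tfrac12\tfrac{d}{dt}\|\rho(t)\|^2 \quad\text{for a.e. } t.
\]
On the right-hand side we bound each of the five groups of terms by its absolute value: the $\epsilon_t$ term, the $\bar f^n-f$ term, the quadrature term, the memory-consistency difference, and the mesh-change term. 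Integrating over $I_n$, these give exactly the integrands of $I_1^n,\dots,I_5^n$.

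Next we integrate the resulting identity in time from $0$ to an arbitrary $s\in[0,t_m]$. Let $s\in I_k$ with $k\le m$. Summing over the intervals $I_1,\dots,I_{k-1}$ and the partial interval $(t_{k-1},s]$ gives
\[
\tfrac12\|\rho(s)\|^2-\tfrac12\|\rho(0)\|^2+\int_0^s\!\int_0^t\mathcal K(t-r)\,a(\rho(r),\rho(t))\,dr\,dt\le \sum_{n=1}^{m}\sum_{i=1}^5 I_i^n .
\]
Here we used that each integrand is nonnegative, so the partial-interval contributions are dominated by the full-interval ones and the sum can be extended up to $n=m$.

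The memory term on the left is the main obstacle. Its nonnegativity is exactly the positive-type property \eqref{PK}, but \eqref{PK} is stated for scalar $\phi\in C([0,T])$, while here $\rho$ is $H_0^1$-valued. To handle this we diagonalize the form $a(\cdot,\cdot)$. Let $\{\varphi_k\}$ be the eigenfunctions of $A$, normalized so that $a(\varphi_k,\varphi_l)=\lambda_k\delta_{kl}$, and expand $\rho(t)=\sum_k\rho_k(t)\varphi_k$. Then
\[
\int_0^s\!\int_0^t\mathcal K(t-r)\,a(\rho(r),\rho(t))\,dr\,dt=\sum_k\lambda_k\int_0^s\rho_k(t)\int_0^t\mathcal K(t-r)\rho_k(r)\,dr\,dt\ge 0 .
\]
The coefficients $\rho_k$ are continuous in time, so \eqref{PK} applies to each of them. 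Exchanging the sum and the integral needs justifying, and for the weakly singular kernel this uses $\mathcal K\in L^1$. This is the same step that was used tacitly in Lemma~\ref{l4.4}, so we expect to invoke it in the same way. Dropping this nonnegative term gives
\[
\|\rho(s)\|^2\le\|\rho(0)\|^2+2I_m \qquad\text{for all } s\in[0,t_m].
\]

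Finally, $\|\rho(t)\|$ is continuous on $[0,t_m]$, so the maximum is attained at some $t_m^*\in[0,t_m]$. Taking $s=t_m^*$ yields $\|\rho_m^*\|^2\le\|\rho(0)\|^2+2I_m$, which is the claimed bound.
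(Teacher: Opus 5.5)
Your proposal is correct and follows the paper's proof: you test \eqref{PEq} with $\phi=\rho(t)$, bound the right-hand side by absolute values, integrate in time, and discard the memory term using \eqref{PK}. Two of your steps are left implicit in the paper's one-line proof, which literally integrates only up to $t_m$: you integrate up to an arbitrary $s\in[0,t_m]$, and you use an eigenfunction expansion to apply the scalar condition \eqref{PK} to the $H_0^1$-valued $\rho$.
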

\begin{proof} Set $\phi=\rho(t)$ in \eqref{PEq} to obtain
\begin{align*}
\frac{1}{2}\frac{d}{dt}\norm{\rho(t)}^2
    &+ \int_0^t \mathcal{K}(t-s)a(\rho(s),\rho(t))~ds \\
    &=\langle\epsilon_t,\rho(t) \rangle + \langle \bar{f}^n-f ,\rho(t) \rangle + \langle \tau_n^{-1} (\mathcal{P}_0^nU^{n-1}-U^{n-1}),\rho(t) \rangle  \\
     &\qquad + 
     \bigg< \int_{0}^{t_n} \mathcal{K}(t_n-s)A^n\mathcal{P}_1^nU(s)~ds-q_n(A^n\mathcal{P}_1^nU),\rho(t) \bigg>\\
     &\qquad +\int_{0}^{t} \mathcal{K}(t-s)a(\omega(s),\rho(t))~ds-\int_{0}^{t_n} \mathcal{K}(t_n-s)\langle A^n\mathcal{P}_1^nU(s),\rho(t) \rangle~ds\\
  &\leq |\langle \epsilon_t,\rho(t) \rangle | + |\langle \bar{f}^n-f ,\rho(t) \rangle| +|\langle\tau_n^{-1}( \mathcal{P}_0^nU^{n-1}-U^{n-1}),\rho(t) \rangle|  \\
  &\qquad 
  +\bigg|\bigg< \int_{0}^{t_n} \mathcal{K}(t_n-s)A^n\mathcal{P}_1^nU(s)~ds-q_n(A^n\mathcal{P}_1^nU),\rho(t) \bigg>\bigg|\\  
   &\qquad + \bigg|\int_{0}^{t} \mathcal{K}(t-s)a(\omega(s),\rho(t))~ds-\int_{0}^{t_n} \mathcal{K}(t_n-s)\langle A^n\mathcal{P}_1^nU(s),\rho(t) \rangle~ds    \bigg|.
\end{align*}
Now integrate with respect to $t$ from $t_{n-1}$ to $t_n$, then sum over $n=1,2,...,m$, and use \eqref{PK} to obtain the desired result. This completes the proof.
\end{proof}

We now estimate the terms $I_i^n,~i=1,2,3,4, 5$ appearing in Lemma \ref{l(PE)}. First, we estimate the spatial error in the following lemma.
\begin{lemma}[Spatial error estimate]\label{spatial}
Under the mesh assumptions \eqref{eq:common-mesh-size}--\eqref{eq:mesh-ratio}, there holds for $1\le m\le N$
\begin{equation}\label{eq:spatial-lemma-bound}
 \sum_{n=1}^m I_1^n
 \le \lVert\rho_m^*\rVert\sum_{n=1}^m\tau_n\eta_n^S,
\end{equation}
where $\eta_n^S$ is defined by \eqref{eta_S} with $\widehat h_n$ given
by~\eqref{eq:common-mesh-size} and  constants  in $\eta_n^S$ are independent of the individual mesh sizes, time steps and memory kernel; $C_3$ may depend on the fixed mesh-ratio bound $L$.
\end{lemma}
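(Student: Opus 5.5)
On $I_n$ the reconstruction error is $\epsilon=\omega-U=l_{n-1}(t)(\mathcal R^{n-1}U^{n-1}-U^{n-1})+l_n(t)(\mathcal R^nU^n-U^n)$. Its time derivative is constant on $I_n$:
\[
\epsilon_t=\tau_n^{-1}\big[(\mathcal R^nU^n-U^n)-(\mathcal R^{n-1}U^{n-1}-U^{n-1})\big].
\]
By Cauchy--Schwarz, $I_1^n\le\int_{I_n}\|\epsilon_t\|\,\|\rho(t)\|\,dt\le\tau_n\|\epsilon_t|_{I_n}\|\,\|\rho_m^*\|$. So it suffices to prove $\|\epsilon_t|_{I_n}\|\le\eta_n^S$, and summing over $n$ then gives \eqref{eq:spatial-lemma-bound}.

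To bound $\epsilon_t$ we use a duality argument in the spirit of Lemma~\ref{lERec} and \cite[Lemma 2.3]{LM06}, carried out on the common coarsening. Let $g:=\epsilon_t|_{I_n}$ and let $z\in H_0^1(\Omega)$ solve $a(\phi,z)=\langle g,\phi\rangle$ for all $\phi\in H_0^1(\Omega)$, so that $|z|_2\le C_{\mathrm{ell}}\|g\|$ by \eqref{eq:elliptic-H2}. Take $\widehat\Pi_n$ to be the Scott--Zhang interpolant onto $\mathbb V(\widehat{\mathcal T}_n)=\mathbb V^{n-1}\cap\mathbb V^n$. Since $\widehat\Pi_n z$ lies in both spaces, Galerkin orthogonality \eqref{GO} holds for both $\mathcal R^n$ and $\mathcal R^{n-1}$, hence $a(\tau_n g,\widehat\Pi_n z)=0$. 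Combining this with \eqref{ER(T)} and the representation \eqref{3.2} on each mesh gives
\[
\tau_n\|g\|^2=a(\tau_n g,z-\widehat\Pi_nz)=\langle A^nU^n-A^{n-1}U^{n-1}-(A_{el}U^n-A_{el}U^{n-1}),\,z-\widehat\Pi_nz\rangle-\langle J^n,z-\widehat\Pi_nz\rangle_{\Sigma_n}+\langle J^{n-1},z-\widehat\Pi_nz\rangle_{\Sigma_{n-1}}.
\]
The first term equals $-\tau_n\langle\partial R^n,z-\widehat\Pi_nz\rangle$. Lemma~\ref{l4.1} on $\widehat{\mathcal T}_n$ with $j=2$ bounds it by $C_{1,2}\tau_n\|\widehat h_n^2\partial R^n\|\,|z|_2$.

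The jump terms are combined on $\breve\Sigma_n$ by extending $J^{n}$ by zero off $\Sigma_n$ and $J^{n-1}$ by zero off $\Sigma_{n-1}$; their difference is then $\tau_n\partial J^n$. We split $\breve\Sigma_n=\widehat\Sigma_n\cup(\breve\Sigma_n\setminus\widehat\Sigma_n)$. On $\widehat\Sigma_n$ the trace interpolation estimate of Lemma~\ref{l4.1} for $\widehat{\mathcal T}_n$ gives $C_{2,2}\|\widehat h_n^{3/2}\partial J^n\|_{\widehat\Sigma_n}|z|_2$.

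The main obstacle is the additional sides $E\subset\breve\Sigma_n\setminus\widehat\Sigma_n$, which lie inside a parent $\widehat K$. Lemma~\ref{l4.1} is not directly available there, so we instead use the scaled trace inequality on the child element $K\subset\widehat K$ having $E$ as a side:
\[
\|v\|_E^2\le C\big(h_K^{-1}\|v\|_K^2+h_K|v|_{H^1(K)}^2\big),\qquad v=z-\widehat\Pi_nz.
\]
The local Scott--Zhang estimates on $\widehat K$ give $\|v\|_{\widehat K}\lesssim H_{\widehat K}^2|z|_{H^2(\omega_{\widehat K})}$ and $|v|_{H^1(\widehat K)}\lesssim H_{\widehat K}|z|_{H^2(\omega_{\widehat K})}$. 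The mesh-ratio bound \eqref{eq:mesh-ratio}, $H_{\widehat K}\le Lh_K$, converts $h_K^{-1}$ into $LH_{\widehat K}^{-1}$, so $\|v\|_E\le C(L)H_{\widehat K}^{3/2}|z|_{H^2(\omega_{\widehat K})}$. Weighting by $\widehat h_n^{-3/2}$ and summing over sides uses that each parent contains a uniformly bounded number of sides, which also follows from $L$ and shape regularity, together with the finite overlap of the patches $\omega_{\widehat K}$. This yields the term $C_3\|\widehat h_n^{3/2}\partial J^n\|_{\breve\Sigma_n\setminus\widehat\Sigma_n}|z|_2$.

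Collecting the three bounds and using $|z|_2\le C_{\mathrm{ell}}\|g\|$, then dividing by $\tau_n\|g\|$, gives $\|\epsilon_t|_{I_n}\|\le\eta_n^S$ with $C_i$ absorbing $C_{\mathrm{ell}}$ and the interpolation constants. None of these constants involves $\tau_n$ or $\mathcal K$, and only $C_3$ depends on $L$.
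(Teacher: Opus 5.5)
Your proof is correct and takes the same route the paper indicates by deferring to \cite[Section 3.6]{LM06}: you reduce to $\|\epsilon_t|_{I_n}\|\le\eta_n^S$ and then use an Aubin--Nitsche duality argument, with elliptic regularity \eqref{eq:elliptic-H2}, Galerkin orthogonality of both $\mathcal R^{n-1}$ and $\mathcal R^n$ against the interpolant onto $\mathbb V^{n-1}\cap\mathbb V^n$, and interpolation on the common coarsening. Your treatment of the sides in $\breve\Sigma_n\setminus\widehat\Sigma_n$ via the scaled trace inequality and \eqref{eq:mesh-ratio} makes explicit the $L$-dependent additional-side estimate behind $C_3$, which the paper only mentions in passing.
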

\begin{proof} The proof closely follows the approach outlined in \cite[Section 3.6]{LM06} that relies on elliptic regularity result \eqref{eq:elliptic-H2}, the Aubin–Nitsche trick along with the orthogonality property of the elliptic reconstructions. Thus, the details of the proof are omitted for brevity.
 \end{proof}
The following lemma provides a  bound for the error that arises due to time discretization.
 \begin{lemma}[Time error estimate]\label{Time_est}
   The following error bound holds for the time discretization error:
    \begin{equation}
     \sum_{n=1}^m I_2^n \leq   \norm{\rho_m^*} \sum_{n=1}^m\tau_n\eta_n^T,
    \end{equation}
    where $\eta_n^T$ is given by  \eqref{lambda}.
\end{lemma}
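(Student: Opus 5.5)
The proof is a direct Cauchy--Schwarz argument on each subinterval $I_n$, followed by the maximality of $\norm{\rho_m^*}$. For $t\in I_n$ we have
\begin{equation*}
|\langle \bar f^n - f(t),\rho(t)\rangle| \le \norm{\bar f^n - f(t)}\,\norm{\rho(t)} \le \norm{\bar f^n - f(t)}\,\norm{\rho_m^*},
\end{equation*}
since $t\le t_n\le t_m$ and, by the definition in Lemma~\ref{l(PE)}, $\norm{\rho(t)}\le\norm{\rho_m^*}$ for all $t\in[0,t_m]$.

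Next, split the data residual with the triangle inequality,
\begin{equation*}
\norm{\bar f^n - f(t)} \le \norm{\bar f^n - f^n} + \norm{f^n - f(t)},
\end{equation*}
where $\bar f^n=\mathcal{P}_0^n f^n$. Integrating over $(t_{n-1},t_n]$ gives
\begin{equation*}
I_2^n \le \norm{\rho_m^*}\int_{t_{n-1}}^{t_n}\Big(\norm{\bar f^n - f^n}+\norm{f^n-f(t)}\Big)\,dt = \norm{\rho_m^*}\,\tau_n\eta_n^T,
\end{equation*}
directly from the definition \eqref{lambda}, whose prefactor $\tau_n^{-1}$ cancels the factor $\tau_n$.

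Summing over $n=1,\dots,m$ and pulling out the $n$-independent factor $\norm{\rho_m^*}$ yields the stated bound. The argument has no real obstacle. The only point needing care is that the same maximal value $\norm{\rho_m^*}$ controls $\norm{\rho(t)}$ on every subinterval $I_n$ with $n\le m$, which holds because each $I_n\subset[0,t_m]$.
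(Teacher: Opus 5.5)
Your proposal is correct and follows the paper's own proof: Cauchy--Schwarz, the splitting $\bar f^n - f(t) = (\bar f^n - f^n) + (f^n - f(t))$, the bound $\norm{\rho(t)}\le\norm{\rho_m^*}$ on each $I_n\subset[0,t_m]$, and summation over $n$. You also correctly note that the prefactor $\tau_n^{-1}$ in \eqref{lambda} cancels the integration length, which gives exactly $\tau_n\eta_n^T$.
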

\begin{proof}   Using the Cauchy--Schwarz inequality, together with the splitting 
\[
\bar{f}^n - f(t) = \bar{f}^n - f^n + f^n - f(t),
\]
and subsequently, summing over $n = 1,2,\ldots,m$, we arrive at the desired result.
\end{proof}  
\begin{lemma} [Quadrature error estimate]\label{Quad_est}
 The following error bound holds for quadrature error:
 \begin{equation*}
      \sum_{n=1}^m I_3^n \leq \norm{\rho_m^*} \sum_{n=1}^m\tau_n \eta_n^{Q} ,
  \end{equation*} 
  where  $ \eta_n^{Q}$ is given by \eqref{Qn}.
\end{lemma}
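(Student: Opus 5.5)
Since the quadrature residual $\mathcal{Q}^n:=\int_{0}^{t_n}\mathcal{K}(t_n-s)A^n\mathcal{P}_1^nU(s)\,ds-q_n(A^n\mathcal{P}_1^nU)$ does not depend on $t\in I_n$, the Cauchy--Schwarz inequality together with $\norm{\rho(t)}\le\norm{\rho_m^*}$ for $t\le t_m$ gives
\begin{equation*}
I_3^n\le \tau_n\norm{\mathcal{Q}^n}\,\norm{\rho_m^*}.
\end{equation*}
So the plan is to prove $\norm{\mathcal{Q}^n}\le\eta_n^Q$ for each $n$ and then sum over $n=1,\dots,m$. We split $\mathcal{Q}^n=\sum_{j=1}^n\mathcal{Q}^n_j$, where $\mathcal{Q}^n_j$ is the local error on $I_j$. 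On $I_j$ we use \eqref{U} to write
\begin{equation*}
V(s):=A^n\mathcal{P}_1^nU(s)=l_{j-1}(s)V^{j-1}+l_j(s)V^j,\qquad V^j:=A^n\mathcal{P}_1^nU^j .
\end{equation*}
This is legitimate because $A^n\mathcal{P}_1^n$ is linear and is applied to the piecewise linear interpolant. It follows that $V(s)-V^j=(s-t_j)A^n\mathcal{P}_1^n\partial U^j$.

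\emph{Weakly singular kernel.} Here $W_{nj}=\int_{I_j}\mathcal{K}(t_n-s)\,ds$, so
\begin{equation*}
\mathcal{Q}^n_j=\int_{I_j}\mathcal{K}(t_n-s)\bigl(V(s)-V^j\bigr)\,ds .
\end{equation*}
Since $\mathcal{K}>0$ and $|s-t_j|\le\tau_j$, we get $\norm{\mathcal{Q}^n_j}\le\tau_jW_{nj}\norm{A^n\mathcal{P}_1^n\partial U^j}$. Summing over $j$ gives exactly the second line of \eqref{Qn}. The case $j=n$ involves the singularity at $s=t_n$, but the bound only uses integrability of $\mathcal{K}$, so it is harmless.

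\emph{Smooth kernel.} Here $W_{nj}=\tau_j\mathcal{K}(t_n-t_j)$. We decompose
\begin{equation*}
\mathcal{Q}^n_j=\int_{I_j}\bigl(\mathcal{K}(t_n-s)-\mathcal{K}(t_n-t_j)\bigr)V(s)\,ds+\mathcal{K}(t_n-t_j)\int_{I_j}\bigl(V(s)-V^j\bigr)\,ds .
\end{equation*}
For the first term, the mean value theorem and \eqref{kernel} give $|\mathcal{K}(t_n-s)-\mathcal{K}(t_n-t_j)|\le\gamma_1\tau_j$. Also $\norm{V(s)}\le\norm{V^{j-1}}+\norm{V^j}$ because $l_{j-1},l_j\in[0,1]$. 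Hence this term is bounded by $\gamma_1\tau_j^2\bigl(\norm{V^{j-1}}+\norm{V^j}\bigr)$.

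For the second term, $|\mathcal{K}|\le\gamma$ and $\int_{I_j}|s-t_j|\,ds=\tau_j^2/2$ give a bound of $\gamma\tau_j^2\norm{A^n\mathcal{P}_1^n\partial U^j}$. With $C_Q=\max\{\gamma,\gamma_1\}$, summing over $j$ gives the first line of \eqref{Qn}.

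Multiplying by $\tau_n\norm{\rho_m^*}$ and summing over $n\le m$ completes the proof. The only delicate point is bookkeeping. One must check that the quadrature nodes $y^j=y(t_j)$ are the right endpoints, as fixed in \eqref{quad1}, so that the interpolation error is measured against $V^j$. One must also apply $A^n\mathcal{P}_1^n$ consistently to all past $U^j$, including those from other meshes, which is fine because $\mathcal{P}_1^n$ is defined on all of $H_0^1(\Omega)$. I expect no genuine obstacle.
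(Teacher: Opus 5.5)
Your proposal is correct and follows the paper's proof essentially step by step. You pull out $\tau_n\|\rho_m^*\|$ by Cauchy--Schwarz, bound the quadrature residual interval by interval via $V(s)-V^j=(s-t_j)A^n\mathcal{P}_1^n\partial U^j$, and in the weakly singular case identify $\int_{I_j}\mathcal{K}(t_n-s)\,ds=W_{nj}$. The only difference is cosmetic and lies in the smooth case: you add and subtract $\mathcal{K}(t_n-t_j)V(s)$ and apply the mean value theorem, where the paper uses the integration-by-parts identity with $\psi_j(s)=t_{j-1}-s$. Both routes produce the same two terms, a kernel-variation term and an interpolation-defect term, and yours gives a slightly better constant on the latter.
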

\noindent
\emph{Proof.}
For each  $n = 1,2,\cdots,N$, recall that
\begin{align}
I_{3}^n := \int_{t_{n-1}}^{t_n} \bigg| \bigg< \int_{0}^{t_n} \mathcal{K}(t_n-s)A^n\mathcal{P}_1^nU(s)~ds-q_n(A^n\mathcal{P}_1^nU),\rho(t)\bigg>\bigg|~dt
\end{align}
We now proceed to estimate $I_{3}^n$. The term $I_{3}^n$ is estimated separately for the smooth and weakly singular kernels. The proof is divided into two steps. In the first step, we establish the smooth kernel estimate, while the second step is devoted to the weakly singular kernel.\\

\noindent
\textbf{Step 1: Smooth kernels}
\begin{sublemma}[Bound on $I_{3}^n$ for smooth kernels]\label{Quad_est_S}
\begin{equation}
\sum_{n=1}^m I_{3}^n\leq \norm{\rho_m^*} \sum_{n=1}^m\tau_n\eta_n^{Q}, 
\end{equation} 
where $\eta_n^{Q}$ is given by \eqref{Qn}.
\end{sublemma}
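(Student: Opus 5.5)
The plan is to bound the inner quadrature defect in $L^2$ pointwise in $n$, uniformly in $t\in I_n$, and then pull $\norm{\rho_m^*}$ out of the time integral. By Cauchy--Schwarz, for $t\in I_n$,
\[
\bigg|\bigg<\int_0^{t_n}\cK(t_n-s)A^n\cP_1^nU(s)\,ds-q_n(A^n\cP_1^nU),\rho(t)\bigg>\bigg|\le \norm{\rho_m^*}\,\norm{D_n},
\]
where $D_n:=\int_0^{t_n}\cK(t_n-s)A^n\cP_1^nU(s)\,ds-q_n(A^n\cP_1^nU)$ does not depend on $t$. Integrating over $I_n$ then gives $I_3^n\le\tau_n\norm{\rho_m^*}\norm{D_n}$. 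After summing over $n$, it therefore suffices to show $\norm{D_n}\le\eta_n^Q$, with $\eta_n^Q$ the smooth-kernel expression in \eqref{Qn}.

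To do this, split $D_n$ over the subintervals using $W_{nj}=\tau_j\cK(t_n-t_j)$ from \eqref{Wnj}:
\[
D_n=\sum_{j=1}^n\int_{t_{j-1}}^{t_j}\Big(\cK(t_n-s)A^n\cP_1^nU(s)-\cK(t_n-t_j)A^n\cP_1^nU^j\Big)ds .
\]
On $I_j$ the function $U$ is linear, so $A^n\cP_1^nU(s)=l_{j-1}(s)A^n\cP_1^nU^{j-1}+l_j(s)A^n\cP_1^nU^j$ by linearity of $A^n\cP_1^n$. Write the integrand as
\[
\big(\cK(t_n-s)-\cK(t_n-t_j)\big)A^n\cP_1^nU(s)+\cK(t_n-t_j)\big(A^n\cP_1^nU(s)-A^n\cP_1^nU^j\big).
\]
For the first piece, use the mean value theorem with \eqref{kernel}: $|\cK(t_n-s)-\cK(t_n-t_j)|\le\gamma_1(t_j-s)\le\gamma_1\tau_j$. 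Together with $\norm{A^n\cP_1^nU(s)}\le\norm{A^n\cP_1^nU^{j-1}}+\norm{A^n\cP_1^nU^j}$ (since $l_{j-1},l_j\in[0,1]$), this piece contributes at most $\gamma_1\tau_j^2(\norm{A^n\cP_1^nU^{j-1}}+\norm{A^n\cP_1^nU^j})$. For the second piece, note that $U(s)-U^j=-(t_j-s)\partial U^j$ on $I_j$. Using $|\cK|\le\gamma$ and $\int_{t_{j-1}}^{t_j}(t_j-s)\,ds=\tau_j^2/2$, it contributes at most $\tfrac{\gamma}{2}\tau_j^2\norm{A^n\cP_1^n\partial U^j}$.

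Summing over $j$ and bounding both $\gamma_1$ and $\gamma/2$ by $C_Q=\max\{\gamma,\gamma_1\}$ gives $\norm{D_n}\le\eta_n^Q$. This yields the stated bound on $\sum_{n}I_3^n$.

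There is no real obstacle here; the only care needed is that all operators are the fixed $A^n\cP_1^n$ at level $n$ applied to the history values $U^j$. Because of this, linearity in $s$ on each $I_j$ holds even though the meshes $\mathbb{V}^j$ change, and no mesh-transfer term appears at this stage.
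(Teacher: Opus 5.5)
Your proposal is correct and follows essentially the same route as the paper. Both split the local quadrature defect on each $I_j$ into a kernel-variation part controlled by $\gamma_1$ and a part involving $A^n\mathcal{P}_1^n\partial U^j$ controlled by $\gamma$, then factor out $\|\rho_m^*\|$ and $\tau_n$. The only difference is cosmetic: the paper obtains the split from the integration-by-parts identity \eqref{fact} with $\psi_j(s)=t_{j-1}-s$ and the product rule, whereas you add and subtract $\mathcal{K}(t_n-t_j)A^n\mathcal{P}_1^nU(s)$ and use the mean value theorem, which incidentally gives the slightly sharper factor $\gamma/2$.
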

\begin{proof}
Upon setting $\psi_{j}(s)=t_{j-1}-s$, we obtain the following elementary result:  \begin{equation} \label{fact}
    \int^{t_j}_{t_{j-1}}y(s)ds-\tau_j y(t_{j})=  \int^{t_j}_{t_{j-1}}\psi_{j}(s)\frac{dy}{ds}ds.
\end{equation}
Substituting $y(s) = \mathcal{K}(t_n - s) A^n \mathcal{P}_1^n U(s)$ for $s \in I_j$, with $U(s) \in H_0^1(\Omega)$ in \eqref{fact}, summing over $j = 1$ to $n$, and subsequently invoking \eqref{quad1}, we arrive at 
\begin{align*}
I_{3}^n
&=\int_{t_{n-1}}^{t_n}\bigg|\sum_{j=1}^{n}  \int^{t_j}_{t_{j-1}} (s-t_{j-1})\mathcal{K}^{\prime}(t_n-s)  \langle A^n\mathcal{P}_1^nU(s), \rho(t) \rangle ~ds \\
&\qquad+ \sum_{j=1}^{n}  \int^{t_j}_{t_{j-1}} (t_{j-1}-s)\mathcal{K}(t_n-s)\langle A^n\mathcal{P}_1^n \partial U^j,\rho(t) \rangle ds \bigg|~dt.
\end{align*}
Invoking \eqref{kernel}, together with the Cauchy-–Schwarz and triangle inequalities, and using the bounds $|l_{j-1}(s)|,~|l_{j}(s)| \leq 1~\forall s \in I_j$ and $|s-t_{j-1}|\leq\tau_j$, we obtain
\begin{align*}
I_{3}^n&\leq 
 \gamma_1  \sum_{j=1}^{n}\tau_j \bigg(\int^{t_j}_{t_{j-1}} \norm{A^n\mathcal{P}_1^nU(s)}~ds\bigg)\bigg( \int_{t_{n-1}}^{t_n}\norm{\rho(t)}~dt\bigg) \\
 &\qquad+ \gamma \sum_{j=1}^{n}\tau_j \norm{ A^n\mathcal{P}_1^n \partial U^j}\bigg(\int^{t_j}_{t_{j-1}} 1~ds\bigg) \bigg(\int_{t_{n-1}}^{t_n}\norm{\rho(t)}~dt\bigg).
\end{align*}
With $C_{Q}$ as in \eqref{Qn}, summing over $n = 1,\cdots,m$ and using \eqref{Qn}, we obtain the asserted result. This completes Step 1.
\end{proof} 
\noindent
\textbf{Step 2: Weakly singular kernel}
\begin{sublemma}[Bound on $I_{3}^n$ for the weakly singular kernel]\label{Quad_est_WS}
\begin{equation*}
\sum_{n=1}^m I_{3}^n\leq \norm{\rho_m^*} \sum_{n=1}^m\tau_n\eta_n^{Q}, 
\end{equation*}
where $\eta_n^{Q}$ is given by \eqref{Qn}.
\end{sublemma}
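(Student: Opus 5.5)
For the weakly singular kernel the weights are $W_{nj}=\int_{t_{j-1}}^{t_j}\mathcal{K}(t_n-s)\,ds$. We therefore write the quadrature error as a sum of local integrals in which the kernel is kept intact and only $U$ is compared with its right endpoint value. Since $q_n(A^n\mathcal{P}_1^nU)=\sum_{j=1}^n W_{nj}A^n\mathcal{P}_1^nU^j$, we have
\begin{equation*}
\int_0^{t_n}\mathcal{K}(t_n-s)A^n\mathcal{P}_1^nU(s)\,ds-q_n(A^n\mathcal{P}_1^nU)
=\sum_{j=1}^n\int_{t_{j-1}}^{t_j}\mathcal{K}(t_n-s)\,A^n\mathcal{P}_1^n\big(U(s)-U^j\big)\,ds .
\end{equation*}
No derivative of $\mathcal{K}$ appears, which matters because $\mathcal{K}'$ is not integrable near $s=t_n$. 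This is the main point where the smooth-kernel argument of Lemma~\ref{Quad_est_S} (via \eqref{fact}) has to be abandoned.

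On $I_j$ the interpolant \eqref{U} gives $U(s)-U^j=l_{j-1}(s)(U^{j-1}-U^j)=-(t_j-s)\,\partial U^j$. Hence the $j$-th summand equals $-\int_{t_{j-1}}^{t_j}(t_j-s)\mathcal{K}(t_n-s)\,ds\;A^n\mathcal{P}_1^n\partial U^j$. The operator $A^n\mathcal{P}_1^n$ is linear and time independent, so it commutes with the interpolation. Using $0\le t_j-s\le\tau_j$ and $\mathcal{K}>0$, we bound the scalar factor by $\tau_jW_{nj}$.

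Next, Cauchy--Schwarz in $L^2(\Omega)$ gives, for each $t\in I_n$,
\begin{equation*}
\Big|\Big\langle \int_0^{t_n}\mathcal{K}(t_n-s)A^n\mathcal{P}_1^nU(s)\,ds-q_n(A^n\mathcal{P}_1^nU),\rho(t)\Big\rangle\Big|
\le \sum_{j=1}^n\tau_jW_{nj}\,\big\|A^n\mathcal{P}_1^n\partial U^j\big\|\,\|\rho(t)\|=\eta_n^Q\|\rho(t)\|.
\end{equation*}
Integrating over $I_n$ and using $\|\rho(t)\|\le\|\rho_m^*\|$ for $t\le t_m$ yields $I_3^n\le\tau_n\eta_n^Q\|\rho_m^*\|$. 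Summing over $n=1,\dots,m$ gives the claim.

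The only delicate step is the first one: the remainder must be expressed without differentiating the kernel, which avoids the factor $\gamma_1$ and the singularity at $s=t_n$ (the case $j=n$). The exact integrated weights $W_{nj}$ make this possible. The resulting bound is finite because $W_{nj}\le\mathcal{K}_1(t_n)$ is integrable, so no $\alpha$-dependent blow-up occurs.
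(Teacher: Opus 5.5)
Your proposal is correct and takes essentially the same route as the paper. You rewrite the quadrature remainder as $\sum_{j=1}^n\int_{t_{j-1}}^{t_j}\mathcal{K}(t_n-s)A^n\mathcal{P}_1^n(U(s)-U^j)\,ds$ and use $U(s)-U^j=-\tau_j l_{j-1}(s)\partial U^j$ with $0\le l_{j-1}\le 1$ to get the factor $\tau_jW_{nj}$, then conclude with Cauchy--Schwarz, $\|\rho(t)\|\le\|\rho_m^*\|$, integration over $I_n$, and summation over $n$, exactly as the paper does.
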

\begin{proof}
In view of \eqref{quad1} and the identity $l_{j-1}(t) + l_{j}(t) \equiv 1$, we deduce
\begin{align*}
    I_{3}^n
   &=\int_{t_{n-1}}^{t_n}\bigg|\bigg<\sum_{j=1}^{n} \int^{t_j}_{t_{j-1}}\mathcal{K}(t_n-s)(A^n\mathcal{P}_1^nU(s)-A^n\mathcal{P}_1^nU^j)ds,\rho(t) \bigg>\bigg|~dt \\
   &=\int_{t_{n-1}}^{t_n}\bigg| \sum_{j=1}^{n} \int^{t_j}_{t_{j-1}}\mathcal{K}(t_n-s)\langle -\tau_j l_{j-1}(s) A^n\mathcal{P}_1^n\partial U^j,\rho(t) \rangle ds\bigg|~dt. 
\end{align*}
Invoking the Cauchy–-Schwarz inequality and noting that $\forall s \in I_j,~ |l_{j-1}(s)|\leq 1$, we arrive at
\begin{align*}
I_{3}^n
    &\leq \norm{\rho_m^*}\sum_{j=1}^{n}\tau_j\norm{A^n\mathcal{P}_1^n\partial U^j}\bigg(\int_{t_{n-1}}^{t_n}\int_{t_{j-1}}^{t_j}\frac{(t_n-s)^{\alpha-1}}{\Gamma(\alpha)}~ds~dt\bigg).
\end{align*}
Since the integrand does not depend on $t$, the double integral above equals $\tau_nW_{nj}$ by \eqref{Wnj}, and hence $I_{3}^n\leq\norm{\rho_m^*}\,\tau_n\eta_n^{Q}$. The desired result follows immediately by summing over $n=1, 2, \ldots, m$.
Further, since $\sum_{j=1}^nW_{nj}=\mathcal{K}_1(t_n)$, we have 
$$\eta_n^Q\leq\mathcal{K}_1(t_n)\max_{1\leq j\leq n}\tau_j\norm{A^n\mathcal{P}_1^n\partial U^j}.$$
\end{proof} 

\begin{lemma}[Memory consistency error estimate]\label{Quad_est_2}
\begin{equation*}
      \sum_{n=1}^m I_{4}^n\leq \norm{\rho_m^*} \sum_{n=1}^m\tau_n\eta_n^{K}, 
\end{equation*}
where $\eta_n^{K}$ is given by \eqref{eta_Q,2}.
\end{lemma}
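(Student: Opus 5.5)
The plan is to bound the integrand of $I_4^n$ pointwise in $t\in I_n$ by $\eta_n^K\,\|\rho(t)\|$. Then we bound $\|\rho(t)\|\le\|\rho_m^*\|$, integrate over $I_n$ to pick up the factor $\tau_n$, and sum over $n=1,\dots,m$. The first step is to rewrite the term $a(\omega(s),\rho(t))$ as an $L^2$ inner product. For $s\in I_j$, the definition \eqref{omega} together with \eqref{ER(T)} gives
\begin{equation*}
a(\omega(s),\phi)=\big\langle l_{j-1}(s)A^{j-1}U^{j-1}+l_j(s)A^jU^j,\phi\big\rangle=:\langle \widetilde{A}(s),\phi\rangle .
\end{equation*}
Writing $g(s):=A^n\mathcal{P}_1^nU(s)$, which is piecewise linear in $s$ with nodal values $A^n\mathcal{P}_1^nU^j$, we split the integrand of $I_4^n$ as
\begin{equation*}
\int_0^t\mathcal{K}(t-s)\langle\widetilde A(s)-g(s),\rho(t)\rangle\,ds+\Big[\int_0^t\mathcal{K}(t-s)\langle g(s),\rho(t)\rangle\,ds-\int_0^{t_n}\mathcal{K}(t_n-s)\langle g(s),\rho(t)\rangle\,ds\Big].
\end{equation*}
The first piece will produce $\eta_n^H$ and the bracket will produce $\eta_n^L$.

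\emph{History mesh change.} On each $I_j$, the difference $\widetilde A(s)-g(s)$ is the convex combination $l_{j-1}(s)\,(A^{j-1}U^{j-1}-A^n\mathcal{P}_1^nU^{j-1})+l_j(s)\,(A^jU^j-A^n\mathcal{P}_1^nU^j)$. Hence its norm is at most $\max_{0\le j\le n}\|A^jU^j-A^n\mathcal{P}_1^nU^j\|$. Applying the Cauchy--Schwarz inequality and $\int_0^t|\mathcal{K}(t-s)|\,ds=\mathcal{K}_1(t)\le\mathcal{K}_1(t_n)$, which holds because $\mathcal{K}_1$ is nondecreasing, bounds the first piece by $\eta_n^H\|\rho(t)\|$ with $\eta_n^H$ as in \eqref{eq:etaH}. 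This argument works uniformly for the smooth kernel \eqref{kernel} and the weakly singular kernel \eqref{wsk}, since only $|\mathcal{K}|\in L^1$ is used.

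\emph{Memory lag.} This is the key step, and the point is to retain the cancellation between the two convolutions rather than estimate them separately. Substituting $\sigma=t-s$ in the first convolution and $\sigma=t_n-s$ in the second, the bracket equals
\begin{equation*}
\int_0^t\mathcal{K}(\sigma)\langle g(t-\sigma)-g(t_n-\sigma),\rho(t)\rangle\,d\sigma-\int_t^{t_n}\mathcal{K}(\sigma)\langle g(t_n-\sigma),\rho(t)\rangle\,d\sigma .
\end{equation*}
Since $g$ is continuous and piecewise linear with slopes $A^n\mathcal{P}_1^n\partial U^j$, it is Lipschitz with constant $\max_{1\le j\le n}\|A^n\mathcal{P}_1^n\partial U^j\|$. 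The shift $t_n-t$ is at most $\tau_n$, so the first integral is bounded by $\tau_n\mathcal{K}_1(t_n)\max_j\|A^n\mathcal{P}_1^n\partial U^j\|\,\|\rho(t)\|$. For the second integral, $\|g\|\le\max_{0\le j\le n}\|A^n\mathcal{P}_1^nU^j\|$ by the nodal convexity of $U$. Moreover, $\int_t^{t_n}|\mathcal{K}(\sigma)|\,d\sigma=\mathcal{K}_1(t_n)-\mathcal{K}_1(t)\le\mathcal{K}_1(t_n)-\mathcal{K}_1(t_{n-1})$ for $t\in I_n$. Together these give exactly $\eta_n^L$ from \eqref{eq:etaL}.

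Adding the two bounds, integrating over $I_n$, and summing yields $\sum_{n=1}^m I_4^n\le\|\rho_m^*\|\sum_{n=1}^m\tau_n(\eta_n^L+\eta_n^H)$. The only delicate point is the change of variables in the memory-lag step. Estimating the two convolutions separately would lose the factor $\tau_n$, and for the weakly singular kernel it would also yield a nonintegrable-looking bound. With the shift-based difference, the kernel enters only through $\mathcal{K}_1$, which is what makes the resulting estimator first order in time and robust in $\alpha$.
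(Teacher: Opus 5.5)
Your proposal is correct and follows essentially the same route as the paper. Both arguments rewrite $a(\omega(s),\phi)$ via the reconstructions as $\langle\widehat g(s),\phi\rangle$ and split the integrand into a history mesh-change part $D_H$ and a memory-lag part $D_L$. The memory-lag part is handled with the same substitutions $r=t-s$, $r=t_n-s$ (splitting the second integral at $r=t$) and the Lipschitz and nodal-maximum bounds for $g_n$. The history part uses the convex-combination bound together with $\mathcal{K}_1(t)\le\mathcal{K}_1(t_n)$, after which you integrate over $I_n$ and sum.
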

\noindent
\emph{Proof.}  Recall that
\begin{align}
I_{4}^n := \int_{t_{n-1}}^{t_n} \bigg|\int_{0}^{t} \mathcal{K}(t-s)a(\omega(s),\rho(t))~ds-\int_{0}^{t_n} \mathcal{K}(t_n-s)\langle A^n\mathcal{P}_1^nU(s),\rho(t) \rangle~ds  \bigg|~dt.
\end{align} 

For $n=1,2,\cdots,N$, we introduce the piecewise linear functions $g_n$, and $\widehat g$ on $[0,t_n]$ by
\begin{equation}\label{eq:gn}
\begin{aligned}
g_n(s)&:=A^n\cP_1^nU(s)=l_{j-1}(s)A^n\cP_1^nU^{j-1}+l_j(s)A^n\cP_1^nU^j,\\
\widehat g(s)&:=l_{j-1}(s)A^{j-1}U^{j-1}+l_j(s)A^jU^j,
\end{aligned}
\end{equation}
for $s\in I_j$, $j=1,\dots,n$. 

\noindent By \eqref{ER(T)} and \eqref{omega}, for all $\phi\in H^1_0(\Omega)$, we have 
$$a(\omega(s),\phi)=\langle\widehat g(s),\phi\rangle.$$ 

\noindent Hence, we can write
\begin{equation}\label{eq:I32-split}
I_{4}^n=\int_{t_{n-1}}^{t_n}\big|\langle D_L(t)+D_H(t),\rho(t)\rangle\big|\,dt,
\end{equation}
where
\begin{align*}
D_L(t)&:=\int_0^t\cK(t-s)g_n(s)\,ds-\int_0^{t_n}\cK(t_n-s)g_n(s)\,ds,\\
D_H(t)&:=\int_0^t\cK(t-s)\big(\widehat g(s)-g_n(s)\big)\,ds.
\end{align*}
The first term represents the memory lag between $t$ and $t_n$, whereas the second term represents the history mesh change. We shall now estimate both terms in the subsequent Lemmas; the proof is completed after Lemma~\ref{lem:4.6.4}.

\begin{sublemma}[Memory-lag estimate]\label{lem:4.6.3}
For each $t\in I_n$,
\[
\|D_L(t)\|\leq\eta_n^L,
\]
 where $\eta_n^L$ is given by \eqref{eq:etaL}.
\end{sublemma}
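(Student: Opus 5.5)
We will bound $D_L(t)$ by shifting both convolutions to a common kernel variable, so that the difference falls on the (piecewise linear) argument $g_n$ rather than on the possibly singular kernel $\cK$. This keeps the cancellation between the memory at $t$ and at $t_n$. Fix $t\in I_n$. The substitution $\sigma=t-s$ in the first integral and $\sigma=t_n-s$ in the second gives
\begin{align*}
D_L(t)=\int_0^t\cK(\sigma)\big(g_n(t-\sigma)-g_n(t_n-\sigma)\big)\,d\sigma-\int_t^{t_n}\cK(\sigma)\,g_n(t_n-\sigma)\,d\sigma
=:D_L^{(1)}(t)-D_L^{(2)}(t).
\end{align*}
Here $g_n$ is defined on all of $[0,t_n]$ by \eqref{eq:gn}, and all arguments lie in $[0,t_n]$, so both pieces are well defined. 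The first piece is the lag, and the second is the tail of the memory over $(t,t_n]$.

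For $D_L^{(1)}$ we use that $g_n$ is continuous and piecewise linear on $[0,t_n]$. Hence it is Lipschitz, with $g_n'(r)=A^n\cP_1^n\partial U^j$ for a.e.\ $r\in I_j$, since $\frac{d}{dr}\big(l_{j-1}(r)V^{j-1}+l_j(r)V^j\big)=(V^j-V^{j-1})/\tau_j$ and $A^n\cP_1^n$ is linear. Writing $g_n(t-\sigma)-g_n(t_n-\sigma)=-\int_{t-\sigma}^{t_n-\sigma}g_n'(r)\,dr$, an interval of length $t_n-t\le\tau_n$ inside $[0,t_n]$, we obtain
\[
\big\|g_n(t-\sigma)-g_n(t_n-\sigma)\big\|\le\tau_n\max_{1\le j\le n}\|A^n\cP_1^n\partial U^j\|.
\]
Integrating against $|\cK(\sigma)|$ over $(0,t)$ gives the factor $\cK_1(t)\le\cK_1(t_n)$, because $\cK_1$ in \eqref{K1} is nondecreasing. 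This produces the first term of $\eta_n^L$ in \eqref{eq:etaL}.

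For $D_L^{(2)}$, $g_n(s)$ is a convex combination of $A^n\cP_1^nU^{j-1}$ and $A^n\cP_1^nU^j$, since $l_{j-1},l_j\ge0$ and $l_{j-1}+l_j=1$. Therefore $\|g_n(s)\|\le\max_{0\le j\le n}\|A^n\cP_1^nU^j\|$. The remaining factor is $\int_t^{t_n}|\cK(\sigma)|\,d\sigma=\cK_1(t_n)-\cK_1(t)\le\cK_1(t_n)-\cK_1(t_{n-1})$, again by monotonicity of $\cK_1$ and $t\ge t_{n-1}$. This gives the second term of $\eta_n^L$. The triangle inequality (Minkowski for Bochner integrals) then yields $\|D_L(t)\|\le\eta_n^L$.

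The only delicate point is the weakly singular case \eqref{wsk}. There we must avoid differentiating $\cK$, because $\cK'$ is not integrable at $0$. The change of variables above is chosen precisely so that only $\cK\in L^1$ enters, through $\cK_1$. Consequently the same argument covers \eqref{kernel} and \eqref{wsk} with no kernel-specific constants. I expect no further obstacle beyond checking that the intervals $[t-\sigma,t_n-\sigma]$ stay in $[0,t_n]$, which holds since $0\le\sigma\le t$.
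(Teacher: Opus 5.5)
Your proposal is correct and follows the paper's proof essentially step for step. The paper also uses the substitutions $r=t-s$ and $r=t_n-s$ with the split at $r=t$. It then bounds the lag term by the Lipschitz bound $\|g_n(s_1)-g_n(s_2)\|\le|s_1-s_2|\max_j\|A^n\mathcal{P}_1^n\partial U^j\|$ times $\cK_1(t)$, and the tail term by $\max_j\|A^n\mathcal{P}_1^nU^j\|$ times $\cK_1(t_n)-\cK_1(t)$, before using monotonicity of $\cK_1$ exactly as you do.
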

\begin{proof}
We set
\[
\begin{gathered}
M_n:=\max_{0\leq j\leq n}\big\|A^n\cP_1^nU^j\big\|,\qquad
M_n':=\max_{1\leq j\leq n}\big\|A^n\cP_1^n\partial U^j\big\|.
\end{gathered}
\]
Since $g_n$ is continuous and piecewise linear on $[0,t_n]$ with $g_n(t_j)=A^n\cP_1^nU^j$ and $g_n'|_{I_j}=A^n\cP_1^n\partial U^j$, we have
\begin{equation}\label{eq:gn-bounds}
\|g_n(s)\|\leq M_n,\qquad\|g_n(s_1)-g_n(s_2)\|\leq|s_1-s_2|\,M_n',\qquad s,s_1,s_2\in[0,t_n].
\end{equation}
Substituting $r=t-s$ in the first integral of $D_L(t)$ and $r=t_n-s$ in the second one, and splitting the second integral at $r=t$, we obtain
\[
D_L(t)=\int_0^t\cK(r)\big[g_n(t-r)-g_n(t_n-r)\big]\,dr-\int_t^{t_n}\cK(r)\,g_n(t_n-r)\,dr.
\]
The first term compares the discrete history at two arguments that differ by $t_n-t\leq\tau_n$; this retains the cancellation between the memory at $t$ and at $t_n$. The second term is the memory accumulated over the short interval $(t,t_n]$. For $r\in[0,t]$, both $t-r$ and $t_n-r$ lie in $[0,t_n]$, and for $r\in[t,t_n]$, $t_n-r\in[0,t_n]$. Hence, by \eqref{eq:gn-bounds},
\begin{align*}
   \|D_L(t)\|&\leq(t_n-t)M_n'\int_0^t |\cK(r)| \,dr+M_n\int_t^{t_n} |\cK(r)|\,dr\\
&\leq\tau_n\cK_1(t)\,M_n'+\big(\cK_1(t_n)-\cK_1(t)\big)M_n.
\end{align*}
Since $\cK_1$ is nondecreasing, $\cK_1(t)\leq\cK_1(t_n)$ and $\cK_1(t_n)-\cK_1(t)\leq\cK_1(t_n)-\cK_1(t_{n-1})$ for $t\in I_n$. Hence, by \eqref{eq:etaL},
\[
\|D_L(t)\|\leq\eta_n^L.
 \]
This completes the proof.
\end{proof}
\begin{sublemma}[History mesh-change estimate]\label{lem:4.6.4}
For each $t\in I_n$, we have
\[
\|D_H(t)\| \leq \eta_n^H,
\]
where $\eta_n^H$ is given by \eqref{eq:etaH}.
\end{sublemma}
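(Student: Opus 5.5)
The plan is to bound $D_H(t)$ pointwise in $s$ and then integrate against $|\cK|$. By \eqref{eq:gn}, on each $I_j$ with $j\le n$ the difference $\widehat g(s)-g_n(s)$ is the linear interpolant of the nodal differences:
\[
\widehat g(s)-g_n(s)=l_{j-1}(s)\big(A^{j-1}U^{j-1}-A^n\cP_1^nU^{j-1}\big)+l_j(s)\big(A^jU^j-A^n\cP_1^nU^j\big),\qquad s\in I_j .
\]
Since $l_{j-1},l_j\ge0$ and $l_{j-1}+l_j\equiv1$ on $I_j$, the triangle inequality gives, for every $s\in[0,t_n]$,
\[
\|\widehat g(s)-g_n(s)\|\le\max_{0\le j\le n}\big\|A^jU^j-A^n\cP_1^nU^j\big\|=:H_n .
\]
Here I use that the convex combination of two vectors has norm at most the maximum of their norms.

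Next, for $t\in I_n$ we have $t\le t_n$, so the integrand in $D_H(t)$ only involves $s\in[0,t]\subset[0,t_n]$. There the pointwise bound applies. Moving the norm inside the Bochner integral gives
\[
\|D_H(t)\|\le\int_0^t|\cK(t-s)|\,\|\widehat g(s)-g_n(s)\|\,ds\le H_n\int_0^t|\cK(r)|\,dr=\cK_1(t)\,H_n .
\]
By \eqref{K1}, $\cK_1$ is nondecreasing, so $\cK_1(t)\le\cK_1(t_n)$. This yields $\|D_H(t)\|\le\cK_1(t_n)H_n=\eta_n^H$ by \eqref{eq:etaH}.

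The only point needing care is the piece of the partition used for $s\in(t_{n-1},t]$. There we need $\widehat g$ and $g_n$ to share the same hat-function representation on $I_n$, which holds by the definition in \eqref{eq:gn}. We also need $A^{j-1}U^{j-1}$ to be the correct endpoint value, consistent with $\omega$ from \eqref{omega}, so that $a(\omega(s),\phi)=\langle\widehat g(s),\phi\rangle$. Once this is checked, the estimate is a direct triangle-inequality argument with no genuine obstacle. Combining this lemma with Lemma~\ref{lem:4.6.3} and the Cauchy–Schwarz inequality in \eqref{eq:I32-split} then gives $I_4^n\le\tau_n\|\rho_m^*\|(\eta_n^L+\eta_n^H)$.
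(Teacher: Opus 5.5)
Your proposal is correct and matches the paper's proof: on each $I_j$ you bound $\widehat g(s)-g_n(s)$, as a convex combination of nodal differences, by $\max_{0\le j\le n}\|A^jU^j-A^n\cP_1^nU^j\|$, integrate against $|\cK(t-s)|$ to get the factor $\cK_1(t)$, and finish with $\cK_1(t)\le\cK_1(t_n)$. The consistency check you flag, that $a(\omega(s),\phi)=\langle\widehat g(s),\phi\rangle$, is needed for the splitting \eqref{eq:I32-split} rather than for this lemma, but including it does no harm.
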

\begin{proof}
For $s \in I_j$, $1 \leq j \leq n$, it follows from \eqref{eq:gn} that 
\[
\widehat g(s)-g_n(s)=l_{j-1}(s)\big(A^{j-1}U^{j-1}-A^n\cP_1^nU^{j-1}\big)+l_j(s)\big(A^jU^j-A^n\cP_1^nU^j\big).
\]
Both brackets are of the form $A^iU^i-A^n\cP_1^nU^i$ with $0\leq i\leq n$. Since $l_{j-1},l_j\geq0$ and $l_{j-1}+l_j=1$, we get $\|\widehat g(s)-g_n(s)\|\leq \max_{0\leq j\leq n}\big\|A^jU^j-A^n\cP_1^nU^j\big\|$ for all $s\in[0,t_n]$. Therefore, for $t\in I_n$,
\begin{align*}
  \|D_H(t)\|&\leq \max_{0\leq j\leq n}\big\|A^jU^j-A^n\cP_1^nU^j\big\|\int_0^t |\cK(t-s)| \,ds\\
  &=\cK_1(t)\,\max_{0\leq j\leq n}\big\|A^jU^j-A^n\cP_1^nU^j\big\|\\
&\leq\cK_1(t_n)\,\max_{0\leq j\leq n}\big\|A^jU^j-A^n\cP_1^nU^j\big\|\\
&\leq \eta_n^H.
\end{align*}
\end{proof}
\noindent 
\textbf{Completion of the proof of Lemma \ref{Quad_est_2}.} Applying the  Cauchy--Schwarz inequality and the triangle inequality  to \eqref{eq:I32-split} and using the estimates established in Lemmas ~\ref{lem:4.6.3} and ~\ref{lem:4.6.4}, we obtain
\begin{align*}
I_{4}^n &\leq \int_{t_{n-1}}^{t_n}\norm{D_L(t)+D_H(t)}\norm{\rho(t)}\,dt \\
   &\leq \norm{\rho_m^*}\int_{t_{n-1}}^{t_n}\big(\norm{D_L(t)}+\norm{D_H(t)}\big)\,dt\\
  &\leq \|\rho_m^*\|\,\tau_n\eta_n^{K}.  
\end{align*}
Summing over $n=1, 2, \ldots, m$ completes the proof of Lemma ~\ref{Quad_est_2}.     \hfill $\square$
\noindent
\begin{lemma}[Mesh change estimate]\label{data_approx}
The following error bound holds for the mesh change error:
\begin{equation}
\sum_{n=1}^m I_5^n \leq \norm{\rho_m^*}\sum_{n=1}^m\tau_n\eta_n^M,
\end{equation}
where $\eta_n^M$ is given by \eqref{mu}.
\end{lemma}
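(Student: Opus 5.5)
The bound follows from the Cauchy--Schwarz inequality, with no new estimate needed. The integrand in $I_5^n$ is $\langle \tau_n^{-1}(\mathcal{P}_0^nU^{n-1}-U^{n-1}),\rho(t)\rangle$. Its first argument does not depend on $t\in I_n$, so I would simply pull it out in norm.

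First, for each $n$ and each $t\in I_n$, I apply Cauchy--Schwarz:
\begin{equation*}
|\langle \tau_n^{-1}(\mathcal{P}_0^nU^{n-1}-U^{n-1}),\rho(t)\rangle|\le \norm{(\mathcal{P}_0^n-I)\tau_n^{-1}U^{n-1}}\,\norm{\rho(t)}=\eta_n^M\norm{\rho(t)}.
\end{equation*}
This uses the linearity of $\mathcal{P}_0^n$ and the definition \eqref{mu}.

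Next, for $n\le m$ we have $t\in I_n\subset[0,t_m]$. By the definition of $\rho_m^*$ in Lemma~\ref{l(PE)}, this gives $\norm{\rho(t)}\le\norm{\rho_m^*}$. Integrating over $I_n$ then yields
\begin{equation*}
I_5^n\le \norm{\rho_m^*}\,\tau_n\eta_n^M.
\end{equation*}

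Finally, summing over $n=1,\dots,m$ gives the claim.

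I do not expect any step to be an obstacle. The only point to note is that $\eta_n^M$ is constant in time on $I_n$, so the time integral simply produces the factor $\tau_n$.
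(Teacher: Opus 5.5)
Your proposal is correct and follows the paper's argument exactly: the paper's proof is the single line ``applying the Cauchy--Schwarz inequality.'' You have filled in the routine steps it leaves implicit, namely that the first argument is constant on $I_n$, that $\norm{\rho(t)}\le\norm{\rho_m^*}$ for $t\le t_m$, and that integrating over $I_n$ and summing over $n$ gives the bound.
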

\begin{proof}   
Applying  the Cauchy--Schwarz inequality, we obtain 
the desired error estimate.
\end{proof}  

\begin{lemma}[Error estimate for \(\rho\)] \label{est}
    For each $m = 1,2,\cdots, N$, the following estimate holds:
    \begin{equation}
        \underset{t\in[0,t_m]}{\max}\norm{\rho(t)} \leq \sqrt{2}\norm{\rho(0)} + 2\sigma_{1,m},
    \end{equation}
where
\begin{equation*}
    \sigma_{1,m}:=\sum_{n=1}^m\tau_n (\eta_n^S + \eta_n^T+ \eta_n^Q+ \eta_n^{K}+\eta_n^M ).
\end{equation*}
\end{lemma}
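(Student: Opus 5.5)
We combine Lemma~\ref{l(PE)} with the five term-wise bounds, Lemmas~\ref{spatial}, \ref{Time_est}, \ref{Quad_est}, \ref{Quad_est_2} and \ref{data_approx}, and then solve the resulting quadratic inequality for $\norm{\rho_m^*}$. Fix $m\in\{1,\dots,N\}$. Lemma~\ref{l(PE)} gives $\norm{\rho_m^*}^2\le\norm{\rho(0)}^2+2I_m$, where $I_m=\sum_{n=1}^m(I_1^n+\dots+I_5^n)$. Each of the five lemmas bounds its sum by $\norm{\rho_m^*}$ times $\sum_{n=1}^m\tau_n\eta_n^{\bullet}$, with $\bullet\in\{S,T,Q,K,M\}$. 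Adding them yields
\begin{equation*}
I_m\le \norm{\rho_m^*}\,\sigma_{1,m},
\qquad\text{hence}\qquad
\norm{\rho_m^*}^2\le \norm{\rho(0)}^2+2\sigma_{1,m}\norm{\rho_m^*}.
\end{equation*}

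Write $x=\norm{\rho_m^*}$, $a=\norm{\rho(0)}$ and $b=\sigma_{1,m}$, so that $x^2\le a^2+2bx$. We want $x\le\sqrt2\,a+2b$. Young's inequality gives $2bx\le \tfrac12x^2+2b^2$, so $\tfrac12x^2\le a^2+2b^2$, i.e.\ $x^2\le 2a^2+4b^2$. Then $x\le\sqrt{2a^2+4b^2}\le\sqrt2\,a+2b$, which is exactly the stated bound. Alternatively, solving the quadratic gives $x\le b+\sqrt{a^2+b^2}\le a+2b$, which is even sharper; the $\sqrt2$ in the statement matches the Young route.

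One point deserves care. In Lemma~\ref{l(PE)} the energy identity is integrated only up to $t_m$, while $\norm{\rho_m^*}$ is the maximum over $[0,t_m]$, attained at some $t_m^*$. To justify $\norm{\rho_m^*}^2\le\norm{\rho(0)}^2+2I_m$, integrate the identity from $0$ to $t_m^*$. The positivity \eqref{PK} applies on $[0,t_m^*]$ for any upper limit, so the memory term can be dropped. The integrals of the remaining right-hand terms over $[0,t_m^*]$ are dominated by those over $[0,t_m]$, because each $I_i^n$ involves absolute values of the integrands.

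This bookkeeping is the only mildly delicate step; I take it as already encoded in Lemma~\ref{l(PE)}. What remains is purely algebraic, and no Gronwall argument is needed.
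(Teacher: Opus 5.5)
Your proposal is correct and matches the paper's proof: you insert the five term-wise bounds into Lemma~\ref{l(PE)} to get $\norm{\rho_m^*}^2\le\norm{\rho(0)}^2+2\norm{\rho_m^*}\sigma_{1,m}$, use Young's inequality to reach $\norm{\rho_m^*}^2\le 2\norm{\rho(0)}^2+4\sigma_{1,m}^2$, and take square roots via $a^2+b^2\le(a+b)^2$. Your remark about integrating only up to $t_m^*$ correctly justifies a step the paper leaves implicit in Lemma~\ref{l(PE)}, and your quadratic-formula alternative, giving the sharper bound $\norm{\rho(0)}+2\sigma_{1,m}$, is also valid.
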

\begin{proof}  
Applying Lemmas \ref{spatial}-\ref{data_approx} in Lemma \ref{l(PE)} yields
\begin{align*}
   \norm{\rho_m^*}^2 \leq \norm{\rho(0)}^2 + 2\norm{\rho_m^*}\sigma_{1,m}. 
\end{align*}
Now applying Young's inequality, we obtain
\begin{align*}
   \norm{\rho_m^*}^2 \leq 2\norm{\rho(0)}^2 + 4\sigma_{1,m}^2.
\end{align*}
The assertion now follows by using the fact $a^2+b^2\leq (a+b)^2,~~ a,b \geq 0$, and this completes the proof.
\end{proof}  
\begin{thm}[Space-time $L^{\infty}(L^2)$-norm \emph{a posteriori} error estimate]\label{Final_Th}
Let $u$ satisfy \eqref{ee} and let $U$ be given by \eqref{U}. Then, for each $m = 1,2,\cdots,N$, the following error bound holds:
    \begin{equation}
        \underset{t\in[0,t_m]}{\max}\norm{u(t)-U(t)}\leq \sqrt{2}\norm{\mathcal{R}^0U^0-u(0)}+ \underset{0 \leq j \leq m}{\max}{\eta_j^{rec}}+ 2\sigma_{1,m},
    \end{equation}
    where $\eta_n^{rec}$ and $\sigma_{1,m}$ are defined in \eqref{rec} and Lemma \ref{est}, respectively.
\end{thm}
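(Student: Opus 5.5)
We split the error as $u-U = -(e) = \epsilon-\rho$, where $e=U-u=\rho-\epsilon$ with $\rho=\omega-u$ and $\epsilon=\omega-U$. The triangle inequality then gives, for every $t\in[0,t_m]$,
\[
\norm{u(t)-U(t)}\le \norm{\rho(t)}+\norm{\epsilon(t)} .
\]
The first term is handled directly by Lemma~\ref{est}:
\[
\max_{t\in[0,t_m]}\norm{\rho(t)}\le \sqrt2\norm{\rho(0)}+2\sigma_{1,m}.
\]
We need to identify $\rho(0)$. Since $\omega(0)=\mathcal{R}^0U^0$ by \eqref{omega} (take $t=t_0$ on $I_1$, where $l_0(t_0)=1$ and $l_1(t_0)=0$), we have $\rho(0)=\mathcal{R}^0U^0-u(0)$. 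This produces exactly the first term of the claimed bound.

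For the reconstruction error, fix $t\in I_n$ with $1\le n\le m$. By \eqref{U} and \eqref{omega},
\[
\epsilon(t)=l_{n-1}(t)\big(\mathcal{R}^{n-1}U^{n-1}-U^{n-1}\big)+l_n(t)\big(\mathcal{R}^nU^n-U^n\big).
\]
The weights satisfy $l_{n-1},l_n\ge0$ and $l_{n-1}+l_n=1$, so this is a convex combination. Applying Lemma~\ref{lERec} at levels $n-1$ and $n$ (the case $n-1=0$ is allowed, since $\eta_0^{rec}$ is defined in Definition~\ref{def:est}) gives
\[
\norm{\epsilon(t)}\le l_{n-1}\eta_{n-1}^{rec}+l_n\eta_n^{rec}\le \max\{\eta_{n-1}^{rec},\eta_n^{rec}\}\le \max_{0\le j\le m}\eta_j^{rec}.
\]
The point $t=0$ is covered by the same bound with $n=1$.

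Adding the two estimates and taking the maximum over $t\in[0,t_m]$ yields the theorem. The only point needing care is confirming that Lemma~\ref{lERec} applies at $n=0$ with the mesh $\mathcal{T}_0$ and operator $A^0$, and that the sign convention $e=U-u$ does not affect the norm. Neither is a real obstacle: the substantive work was already done in Lemma~\ref{est} and the lemmas preceding it.
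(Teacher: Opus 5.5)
Your proposal is correct and follows the paper's proof essentially line for line. You split $e=\rho-\epsilon$ and apply the triangle inequality, bound $\rho$ by Lemma~\ref{est}, and bound $\epsilon(t)$ as a convex combination of $\epsilon^{n-1},\epsilon^n$ to get $\norm{\epsilon(t)}\le\max_{0\le j\le m}\eta_j^{rec}$ via Lemma~\ref{lERec}; you also make explicit the identification $\rho(0)=\mathcal{R}^0U^0-u(0)$, which the paper leaves implicit (strictly, this uses the formulas for $U$ and $\omega$ extended to the closed interval $\bar I_1$, since $I_1$ is open at $t_0$).
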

\begin{proof} 
In view of the splitting $e=\rho-\epsilon$ with $\rho=\omega-u$ and $\epsilon=\omega-U$, we obtain
\begin{equation}\label{error}
     \norm{e(t)}=\norm{U(t)-u(t)}\leq \norm{\epsilon(t)}+\norm{\rho(t)}.
 \end{equation}
 Now, for $t\in I_n$, we use Lemma \ref{lERec} to bound the first term
\begin{align}
    \norm{\ep(t)}&=\norm{l_{n-1}(t)\ep^{n-1}+l_n(t)\ep^n}
     \leq \underset{0 \leq j \leq m}{\max}\norm{\ep^j}\nonumber\\
    &\leq \underset{0 \leq j \leq m}{\max}{\eta_j^{rec}} \label{ep}.
 \end{align}
Combining \eqref{ep}, \eqref{error}, and Lemma \ref{est} completes the proof.
\end{proof}  
\begin{myrem}[$\alpha$-robustness of the upper bound]\label{rem:alpha}
For the weakly singular kernel case, the upper bound of Theorem~\ref{Final_Th} is robust with respect to $\alpha$ in the following sense.

\smallskip
\noindent
(i) The reliability constants are independent of $\alpha$.
The proof of Lemma~\ref{l(PE)} uses \eqref{PK},
so no $\alpha$-dependent coercivity constant enters. 
The constants $C_0,\ldots,C_3$ may depend on the elliptic regularity
constant, mesh shape-regularity, and the fixed relative-refinement
bound $L$, but remain independent of the individual mesh sizes,
time steps and fractional parameter $\alpha$.

\smallskip
\noindent
(ii) The kernel-dependent quantities admit uniform bounds.
In the weakly singular case, $\alpha$ enters the estimators \eqref{Qn}, \eqref{eq:etaL}
and \eqref{eq:etaH} only through the weights $W_{nj}$
and $\mathcal{K}_1(t)=t^{\alpha}/\Gamma(\alpha+1)$. Set
\[
 c_*:=\max_{1\le s\le2}\frac{1}{\Gamma(s)}<1.13,
 \qquad T_*:=\max\{1,T\}.
\]
Since for $0<\alpha<1$, $\Gamma(\alpha+1)\ge c_*^{-1}$ and $t^{\alpha}\le T_*$ for
$t\in[0,T]$, we have
\[
 \mathcal{K}_1(t)\le c_*T_*,
 \qquad
 \sum_{j=1}^{n}W_{nj}=\mathcal{K}_1(t_n)\le c_*T_*,
 \qquad
 \sum_{n=1}^m\big(\mathcal{K}_1(t_n)-\mathcal{K}_1(t_{n-1})\big)
 =\mathcal{K}_1(t_m)\le c_*T_*.
\]
Thus the $\alpha$-dependent factors in $\eta_n^{Q}$, $\eta_n^L$ and
$\eta_n^H$ are bounded uniformly for $\alpha\in(0,1)$.

For a fixed time grid, these factors and the quadrature weights \eqref{Wnj}
also have finite limits at both ends of $(0,1)$:
\[
 \lim_{\alpha\to0^+}W_{nj}=\delta_{nj},\quad
 \lim_{\alpha\to0^+}\mathcal{K}_1(t_n)=1\ (n\ge1),\quad
 \lim_{\alpha\to0^+}\big(\mathcal{K}_1(t_n)-\mathcal{K}_1(t_{n-1})\big)=\delta_{n1},\quad
\]
\[
 \lim_{\alpha\to1^-}W_{nj}=\tau_j,\quad
 \lim_{\alpha\to1^-}\mathcal{K}_1(t_n)=t_n,\quad
 \lim_{\alpha\to1^-}\big(\mathcal{K}_1(t_n)-\mathcal{K}_1(t_{n-1})\big)=\tau_n,\quad
\]
where $\delta_{nj}$ is the Kronecker delta. In particular, $\eta_n^Q\to\tau_n\norm{A^n\mathcal{P}_1^n\partial U^n}$ as $\alpha\to0^+$ and $\eta_n^Q\to\sum_{j=1}^n\tau_j^2\norm{A^n\mathcal{P}_1^n\partial U^j}$ as $\alpha\to1^-$.
\end{myrem}

\begin{myrem}[Nonconvex domains]\label{rem:nonconvex}
In Theorem~\ref{Final_Th}, the convexity of $\Omega$ enters only through
\eqref{eq:elliptic-H2}. On a nonconvex polygon with largest interior angle
$\omega$, \eqref{eq:elliptic-H2} holds with $|z|_{H^{1+s}(\Omega)}$,
$0<s<\pi/\omega$, in place of $|z|_{H^2(\Omega)}$ \cite{G85}. Using
Lemma~\ref{l4.1} with $j=1+s$, the proof of Theorem~\ref{Final_Th} carries
over with the weights $h^2$, $h^{3/2}$ in \eqref{rec}--\eqref{eta_S} replaced
by $h^{1+s}$, $h^{1/2+s}$. The L-shaped experiment in
Section~\ref{sec:num-adapt} uses the original weights.
\end{myrem}

\section{Numerical experiments}\label{sec:5}
We examine the convergence of the estimators for smooth and weakly singular kernels, their use in space--time adaptivity, and their dependence on the fractional parameter $\alpha$. 
We use FreeFEM and MATLAB for all the computations. Throughout, we use conforming $\mathbb{P}_1$ finite elements and BE time stepping. The estimators are computed up to the unevaluated constants in the reconstruction and interpolation estimates, which are set to one in the implementation. The reported values therefore assess convergence and practical performance, rather than provide certified upper bounds.

\subsection{Convergence on uniform meshes}\label{sec:num-uniform}
The first two experiments are posed on $\Omega=(0,1)^2$ with homogeneous Dirichlet boundary conditions, $A=-\Delta$ and final time $T=1$. We use uniform spatial meshes and uniform time steps $\tau_n=\tau$, with $\tau\sim h^2$ to balance the expected spatial and temporal errors. These coupled refinements assess the combined convergence behaviour; separate spatial and temporal studies would be needed to isolate the two orders.

 \subsubsection{Smooth kernel}
For the smooth kernel case, we adopt the benchmark presented in \cite{MT93}, where the problem was originally considered in one spatial dimension. We extend this to a two-dimensional setting, where the kernel is chosen as \(\mathcal{K}(t-s) = e^{-2(t-s)}\). The exact solution to the problem \eqref{ee} in this setting is given by
\begin{equation*}
  \begin{split}
    u(x,y,t) &= e^{-t}
    \left[\cos(\mu_{1}t) + \mu_{1}^{-1} \sin(\mu_{1}t)\right] \sin(\pi x)\sin(\pi y) \\
    &\quad + \frac{1}{4\pi^2} \left\{ 2 - e^{-t} \left[ 2\cos(\mu_{2}t) - (\mu_{2} - \mu_{2}^{-1})\sin(\mu_{2}t) \right] \right\} \sin(2\pi x)\sin(2\pi y),
  \end{split}
\end{equation*}
where \( \mu_1 = \sqrt{\pi^2 - 1} \) and \( \mu_2 = \sqrt{4\pi^2 - 1} \).

The corresponding source term \( f(x,y,t) \) is computed by substituting the exact solution into equation \eqref{ee}, yielding
\begin{equation*}
  \begin{split}
    f(x,y,t) &= 2 \sin(2\pi x)\sin(2\pi y) 
    + \pi^2 \mu_1^{-1} e^{-t} \sin(\mu_1 t) \sin(\pi x)\sin(\pi y) \\
    &\quad - e^{-t} \left[ \cos(\mu_2 t) + \mu_2^{-1} \sin(\mu_2 t) \right] \sin(2\pi x)\sin(2\pi y).
  \end{split}
\end{equation*}

\subsubsection{Weakly singular kernel}
Following \cite{KPF15}, we consider the weakly singular kernel defined by \(\mathcal{K}(t-s) = \frac{(t-s)^{\alpha-1}}{\Gamma(\alpha)} \), with \( \alpha = 1/2 \). The source term is chosen so that the exact solution of \eqref{ee} is
\begin{equation*}
    u(x,y,t) = (t^{3/2} + 1) \sin(\pi x)\sin(\pi y).
\end{equation*}
For $A=-\Delta$ and $\alpha=1/2$, substitution gives
\[
f(x,y,t)=\left[\frac32 t^{1/2}+2\pi^2\left(\frac{\Gamma(5/2)}{\Gamma(3)}t^2+\frac{t^{1/2}}{\Gamma(3/2)}\right)\right]\sin(\pi x)\sin(\pi y).
\]
In both square-domain examples, $u_0$ is obtained by setting $t=0$ in the exact solution.

\subsubsection{Error and estimator convergence}

At the final time ($m=N$), we compare the error in $L^\infty(0,T;L^2(\Omega))$ with the reconstruction, spatial, temporal, quadrature and memory-lag contributions:
\[
\max_{0\leq n\leq N}\eta_n^{\mathrm{rec}},\qquad
\sum_{n=1}^{N}\tau_n\eta_n^S,\qquad
\sum_{n=1}^{N}\tau_n\eta_n^T,\qquad
\sum_{n=1}^{N}\tau_n\eta_n^Q,\qquad
\sum_{n=1}^{N}\tau_n\eta_n^L.
\]
\begin{figure}[t]
\centering
\begin{subfigure}[t]{0.49\textwidth}
\centering
\includegraphics[width=\linewidth]{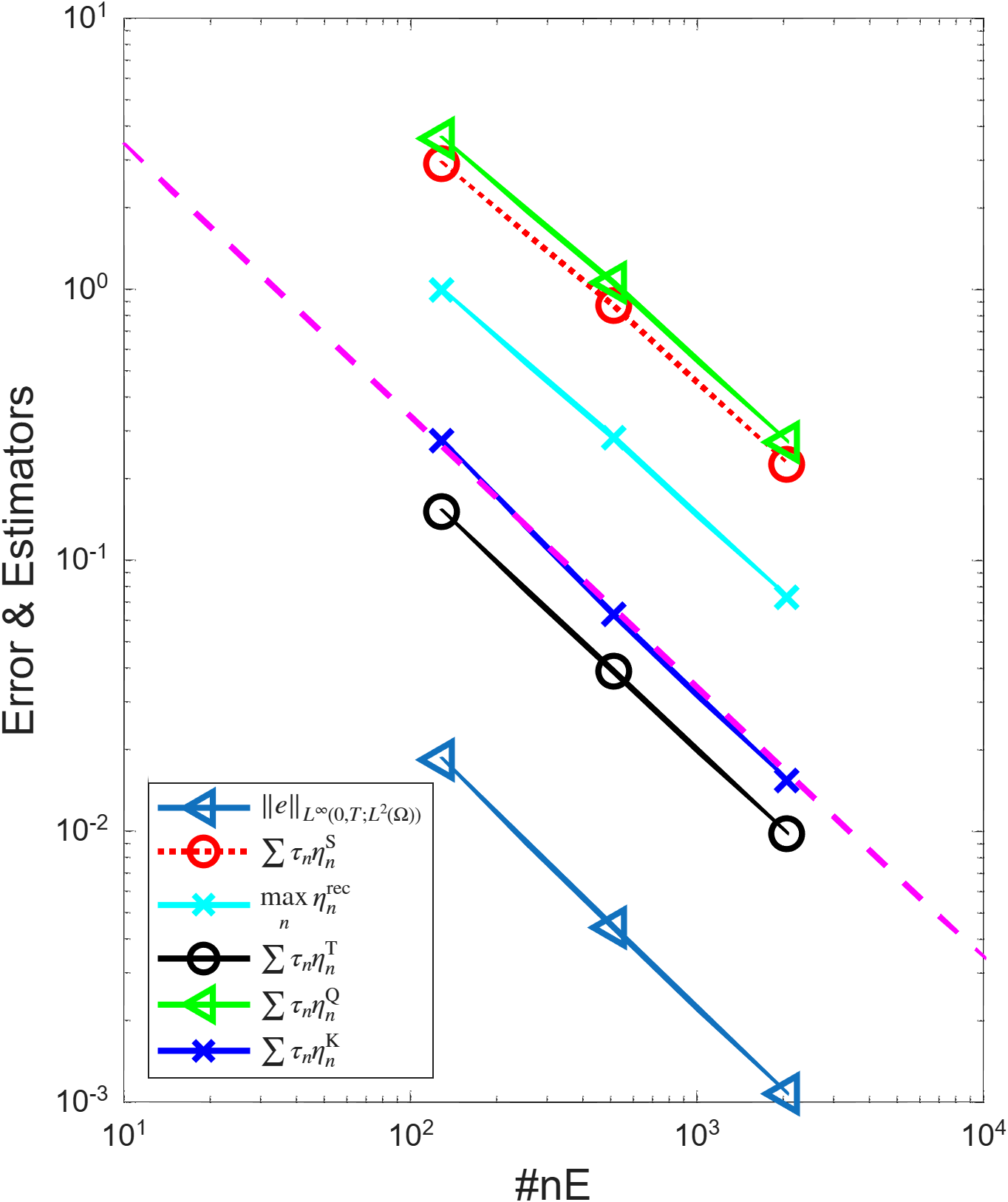}
\caption{Errors and estimators for smooth kernel}
\label{fig:BE_smooth_kernel_Square}
\end{subfigure}
\hfill
\begin{subfigure}[t]{0.49\textwidth}
\centering
\includegraphics[width=\linewidth]{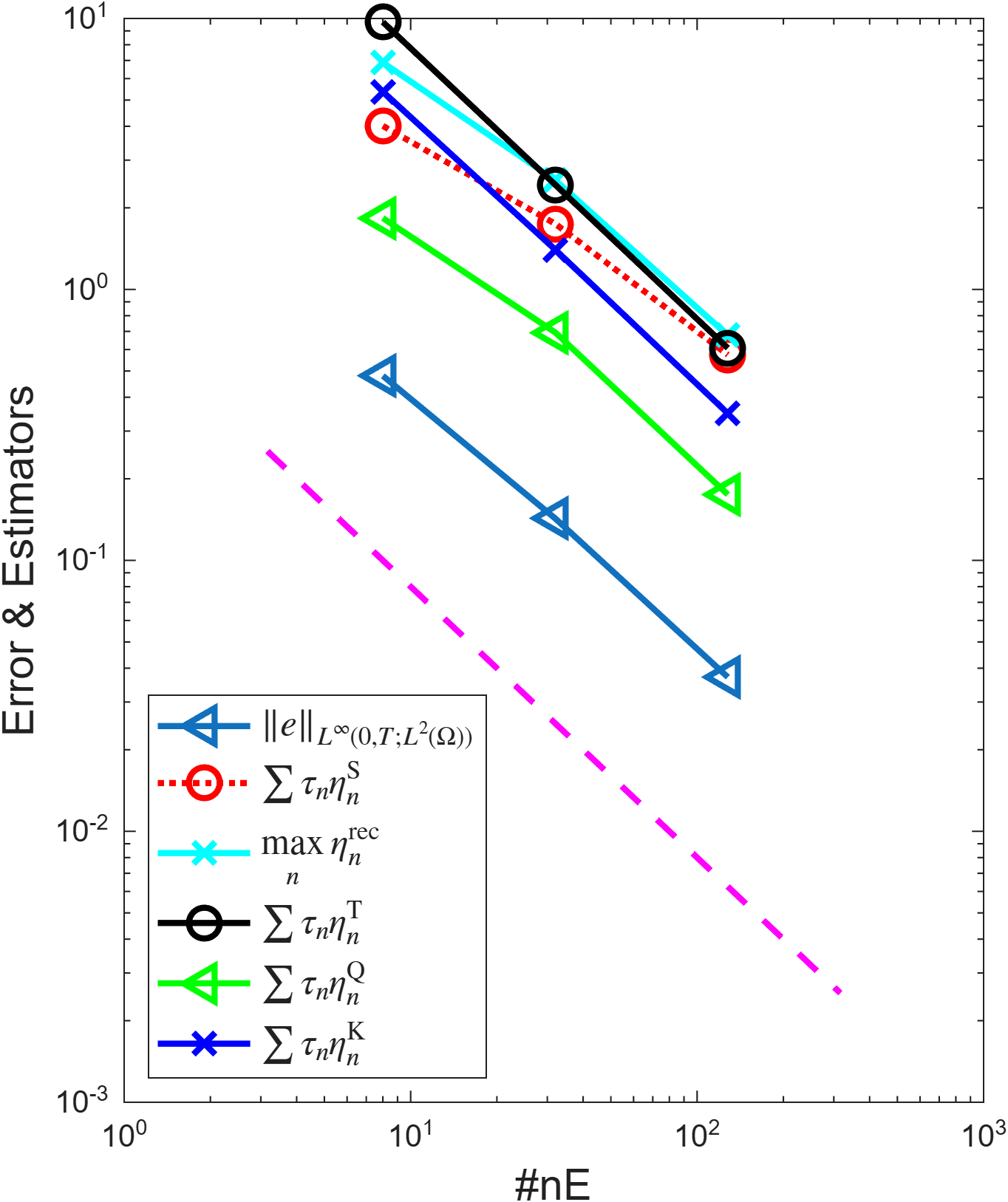}
\caption{Errors and estimators for weakly singular kernel}
\label{fig:BE_WS_kernel_Square}
\end{subfigure}
\caption{Error and estimator contributions on uniform meshes for (a) the smooth kernel and (b) the weakly singular kernel. The horizontal axis is the number of elements $\#nE$. The magenta dashed line has slope $-1$, corresponding to $O(h^2)$ under the coupling $\tau\sim h^2$. On each fixed spatial mesh, $\eta_n^H=\eta_n^M=0$ and $\eta_n^K=\eta_n^L$.}
\label{fig:convergence}
\end{figure}

Figure~\ref{fig:convergence} shows an approximately first-order decay with respect to the number of elements for the error and the estimator contributions in both examples. Since $\#nE\sim h^{-2}$, this behaviour is consistent with the expected $O(h^2+\tau)$ convergence under $\tau\sim h^2$. The different error magnitudes cannot be attributed solely to the kernel, since the two experiments use different exact solutions and source terms.

\subsection{Space--time adaptivity on an L-shaped domain}\label{sec:num-adapt}
\subsubsection{Benchmark problem}

The uniform-mesh tests do not assess adaptive refinement. We therefore consider a manufactured solution with a corner singularity and two temporal layers on an L-shaped domain. The L-shaped domain is chosen to introduce a spatial corner singularity and, together with the two temporal layers, to assess the practical performance of the space–time adaptive algorithm under reduced solution regularity. Since this domain is not convex, the regularity assumption \eqref{eq:elliptic-H2} does not hold, and Theorem~\ref{Final_Th} does not apply directly. This experiment complements the convergence tests on the convex domain in Section~\ref{sec:num-uniform} by examining the behaviour of the proposed indicators beyond the elliptic-regularity setting of the analysis; it is therefore presented as a numerical investigation of adaptivity rather than a verification of the reliability bound on nonconvex domains.

We prescribe the exact solution on the L-shaped domain $\Omega=(0,1)^2\setminus[0.5,1]^2$:
\begin{equation*}
u(x,t)=g(t)\,\Phi(x),\qquad \Phi(x)=r^{2/3}\sin\Big(\frac{2\phi}{3}\Big)\chi(r),\qquad g(t)=t^{3/4}+(t-1/2)_+^{3/4},
\end{equation*}
where $r=\sqrt{(x-0.5)^2+(y-0.5)^2}$ is the distance from the re-entrant corner $c=(0.5,0.5)$, $\phi\in[0,3\pi/2]$ is the angle measured from the upper re-entrant edge, and $\chi(r)$ is a $C^4$ radial cutoff that equals $1$ for $r\le 0.05$ and $0$ for $r\ge 0.45$. The kernel is $\mathcal{K}(t)=t^{-1/2}/\Gamma(1/2)$, i.e., $\alpha=1/2$, and $A=-\Delta$. The source term is
\begin{equation*}
f(x,t)=g'(t)\,\Phi(x)+(\mathcal{K}*g)(t)\,A\Phi(x),\qquad (\mathcal{K}*g)(t)=\frac{\Gamma(7/4)}{\Gamma(9/4)}\Big(t^{5/4}+(t-1/2)_+^{5/4}\Big),
\end{equation*}
with $A\Phi=-\sin(2\phi/3)\big(r^{2/3}\chi''+\tfrac{7}{3}r^{-1/3}\chi'\big)$; $A\Phi$ is supported in the cutoff annulus and vanishes for $r\le0.05$ and $r\ge0.45$, so $f$ vanishes on $\partial\Omega$.
For the evaluation of $f$, the contribution $\frac34(t-\frac12)^{-1/4}$ to $g'$ is included only when $t>1/2$ and is set to zero otherwise. Thus, $f$ is not evaluated at $t = 1/2$, even when this is a time node; the BE scheme evaluates $f$ only at positive time levels.

\noindent\textbf{Spatial singularity.} The domain has a re-entrant corner with an interior angle of $3\pi/2$. Near this corner, the solution behaves like $\Phi\sim r^{2/3}$ and its gradient behaves like $\nabla\Phi\sim r^{-1/3}$, which blows up at the corner. This is the classical corner singularity of non-convex domains; the resulting loss of regularity can reduce the convergence rate of finite element approximations on uniform meshes. The spatial adaptivity is designed to resolve it by refining the mesh near the corner.

\noindent\textbf{Temporal singularity.} The time-dependent factor $g(t)=t^{3/4}+(t-1/2)_+^{3/4}$ has a singular derivative at $t=0$ (where $g'(t)\sim t^{-1/4}$) and at $t=1/2$ (where $g'$ becomes unbounded as $t \rightarrow 1/2$). These are two temporal layers, which the time adaptivity must resolve by reducing the time step near these points. Together, the spatial and temporal singularities make this a challenging test for the 
space--time adaptive algorithm.

\subsubsection{Adaptive indicators and algorithm}
We use local indicators derived from the estimator components to select the triangulations $\mathcal{T}_n$ and the time steps $\tau_n$. The spatial tests control reconstruction and data oscillation, while a practical temporal indicator controls step selection.

\smallskip
\noindent\emph{Space indicators.} For $K\in\mathcal{T}_n$, we use the elementwise contributions of the elliptic reconstruction estimator \eqref{rec},
\begin{equation*}
\eta_{n,K}^2:=h_K^4\|R^n\|_K^2+\frac12\sum_{E\in\mathcal{E}_n,\,E\subset\partial K}h_E^3\|J^n\|_E^2,
\quad\text{and}\quad \big(\eta_n^{\mathrm{rec}}\big)^2=C_0^2\sum_{K\in\mathcal{T}_n}\eta_{n,K}^2,
\end{equation*}
and the elementwise data oscillation $\mathrm{osc}_{n,K}:=\|\bar f^n-f^n\|_K$, which is the first part of the time estimator \eqref{lambda}. Both quantities are reduced by refining the elements on which these estimators are large.

\smallskip
\noindent\emph{Time indicator.} The temporal indicator combines data variation, the current discrete time derivative weighted by the remaining memory, and the increment of the kernel primitive:

\begin{equation*}
\begin{split}
E_n:={}&\frac{1}{\tau_n}\int_{t_{n-1}}^{t_n}\|f^n-f(t)\|\,dt\\
&{}+\tau_n\,\mathcal{K}_1(T-t_{n-1})\,\|A^n\mathcal{P}_1^n\partial U^n\|+\big(\mathcal{K}_1(t_n)-\mathcal{K}_1(t_{n-1})\big)\|A^n\mathcal{P}_1^nU^n\|,
\end{split}
\end{equation*}
where $\mathcal{K}_1(T-t_{n-1})$ is the remaining integrated kernel weight. The nominal acceptance test is $TE_n\leq\mathrm{TOL}_t$. If every step satisfies this test, then $\sum_n\tau_nE_n\leq\mathrm{TOL}_t$, where $\mathrm{TOL}_t$ is the tolerance related to temporal contributions.

\begin{algorithm}[t]

\caption{Space--time adaptive BE scheme}\label{alg:adapt}
\begin{algorithmic}[1]
\Require initial mesh $\mathcal{T}_0$, initial step $\tau_1$, tolerances $\mathrm{TOL}_{\mathrm{rec}},\mathrm{TOL}_{\mathrm{osc}},\mathrm{TOL}_t$, marking parameter $\theta\in(0,1]$, threshold $\vartheta>0$
\State $U^0:=\mathcal{P}_0^0u_0$, $t_0:=0$, $n:=1$, $\mathcal{T}_1:=\mathcal{T}_0$
\While{$t_{n-1}<T$}
 \State $\tau_n:=\min\{\tau_n,T-t_{n-1}\}$, $t_n:=t_{n-1}+\tau_n$, $E^{\mathrm{prev}}:=\infty$
 \State \textbf{solve:} compute $U^n\in\mathbb{V}^n$ from \eqref{FDS}\label{alg:solve}
 \State \textbf{space:} \textbf{if} $\eta_n^{\mathrm{rec}}>\mathrm{TOL}_{\mathrm{rec}}\|U^n\|$ or $\|\bar f^n-f^n\|>\mathrm{TOL}_{\mathrm{osc}}\|f^n\|$ \textbf{then}
 \Statex \hspace{\algorithmicindent}\hspace{\algorithmicindent} mark a minimal set $\mathcal{M}\subset\mathcal{T}_n$ with $\sum_{K\in\mathcal{M}}\eta_{n,K}^2\geq\theta\sum_{K\in\mathcal{T}_n}\eta_{n,K}^2$
 \Statex \hspace{\algorithmicindent}\hspace{\algorithmicindent} (and, likewise, with $\mathrm{osc}_{n,K}^2$ for the second criterion);
 \Statex \hspace{\algorithmicindent}\hspace{\algorithmicindent} refine the marked elements to obtain a new $\mathcal{T}_n$ and go to line~\ref{alg:solve}
 \State \textbf{time:} compute $E_n$; \textbf{if} $T E_n>\mathrm{TOL}_t$ and $\tau_nE_n>\vartheta\,\mathrm{TOL}_t$ and $\tau_nE_n<0.9\,E^{\mathrm{prev}}$ \textbf{then}
 \Statex \hspace{\algorithmicindent}\hspace{\algorithmicindent} $E^{\mathrm{prev}}:=\tau_nE_n$, $\tau_n:=\tau_n\max\{\frac14,\,0.9\,\mathrm{TOL}_t/(TE_n)\}$, $t_n:=t_{n-1}+\tau_n$, and go to line~\ref{alg:solve}
 \State \textbf{accept:} compute $\eta_n^{\mathrm{rec}},\eta_n^S,\eta_n^T,\eta_n^Q,\eta_n^K,\eta_n^M$ and store $U^n$
 \State $\tau_{n+1}:=\tau_n\min\{2,\max\{\frac12,\,0.9\,\mathrm{TOL}_t/(TE_n)\}\}$, $\mathcal{T}_{n+1}:=\mathcal{T}_n$, $n:=n+1$
\EndWhile
\State \Return $U$ and the estimator $\max_{n}\eta_n^{\mathrm{rec}}+2\sigma_{1,N}$ of Theorem~\ref{Final_Th}
\end{algorithmic}
\end{algorithm}

Each trial step is solved using the stored history. If either spatial criterion fails, the marked elements are refined and the step is recomputed. The meshes are nested and are not coarsened, so $\mathcal{P}_0^nU^{n-1}=U^{n-1}$ and $\eta_n^M=0$. 

The time step controller limits reductions to a factor of at least $1/4$ and increases to at most a factor of $2$. Near $t=0$, the data-variation term in $E_1$ scales like $\tau_1^{-1/4}$, whereas its integrated contribution scales like $\tau_1^{3/4}$. The algorithm therefore also accepts a step when $\tau_nE_n\leq\vartheta\,\mathrm{TOL}_t$, or stops reducing it when the integrated indicator fails to decrease by at least $10\%$. The latter safeguard detects stagnation.

\subsubsection{Implementation and reported quantities}

 Algorithm~\ref{alg:adapt} is implemented in FreeFEM with $\mathbb{P}_1$ elements. Marked triangles are subdivided by \texttt{splitmesh}, so that the meshes are nested. We take the initial mesh with $h=1/8$, $\tau_1=10^{-4}$, $\theta=1/2$, $\vartheta=0.01$, the unevaluated constants set to $1$, and the tolerances
\[
(\mathrm{TOL}_{\mathrm{rec}},\mathrm{TOL}_{\mathrm{osc}},\mathrm{TOL}_t)=2^{-k/2}\,(0.12,\,0.12,\,0.2),\qquad k=0,1,2,
\]
which we refer to as the runs A0, A1 and A2, respectively. For comparison, the problem is also solved on uniform meshes with $h=1/8, 1/16, 1/32$ and uniform time steps $\tau=8h^2$, which we refer to as the runs U8, U16, and U32. Thus, the number in the label is $k$ for the adaptive runs and $1/h$ for the uniform runs; see the second column of Table~\ref{tab:adapt}. The reported error $\|e\|$ is the maximum of $\|u(t)-U(t)\|$ sampled at the time levels and interval midpoints, using spatial quadrature of order $5$. It approximates, but need not equal, the continuous-in-time $L^\infty(0,T;L^2(\Omega))$ error, and the estimator is $\eta:=\max_n\eta_n^{\mathrm{rec}}+2\sigma_{1,N}$ of Theorem~\ref{Final_Th}, with $\eta_n^Q$, $\eta_n^L$ and $\eta_n^H$ as in \eqref{Qn}, \eqref{eq:etaL} and \eqref{eq:etaH}. In Table~\ref{tab:adapt}, $N$ is the number of time steps, $\#\mathrm{dof}$ the number of degrees of freedom of the final mesh, and $W=\sum_n\#\mathrm{dof}_n$ the cumulative number of unknowns over accepted time steps. 
The experimental orders of convergence with respect to $W$,
\[
\mathrm{EOC}_e:=\frac{\log(\|e\|_i/\|e\|_{i-1})}{\log(W_i/W_{i-1})},\qquad \mathrm{EOC}_\eta:=\frac{\log(\eta_i/\eta_{i-1})}{\log(W_i/W_{i-1})},\qquad W:=\sum_n\#\mathrm{dof}_n,
\]
are computed between consecutive runs $i-1$ and $i$ of each group.

\begin{table}[htbp]
\centering
\caption{Adaptive runs A0, A1, A2 (Algorithm~\ref{alg:adapt}, tolerances $2^{-k/2}(0.12,0.12,0.2)$, $k=0,1,2$) and uniform runs U8, U16, U32 ($h=1/8,1/16,1/32$, $\tau=8h^2$) for the L-shaped benchmark with $\alpha=1/2$: number of time steps $N$, rejected steps, final number of degrees of freedom, work $W=\sum_n\#\mathrm{dof}_n$, the sampled error $\|e\|$, estimator $\eta=\max_n\eta_n^{\mathrm{rec}}+2\sigma_{1,N}$, and their experimental orders of convergence with respect to $W$.}\label{tab:adapt}
\small
\begin{adjustbox}{max width=\linewidth}
\begin{tabular}{llrrrrrrrr}
\hline
run & parameters & $N$ & rej. & $\#\mathrm{dof}$ & $W$ & $\|e\|$ & $\mathrm{EOC}_e$ & $\eta$ & $\mathrm{EOC}_\eta$\\
\hline
A0  & adaptive, $k=0$   & 120 & 2  & 2160 & 2.59e{+}5 & 1.01e{-}3 & --      & 1.562 & --\\
A1  & adaptive, $k=1$   & 165 & 2  & 3005 & 4.96e{+}5 & 7.47e{-}4 & $-0.47$ & 1.101 & $-0.54$\\
A2  & adaptive, $k=2$   & 229 & 1  & 4355 & 9.97e{+}5 & 5.54e{-}4 & $-0.43$ & 0.801 & $-0.46$\\
U8  & uniform, $h=1/8$  & 8   & -- & 71   & 5.68e{+}2 & 2.80e{-}2 & --      & 13.33 & --\\
U16 & uniform, $h=1/16$ & 32  & -- & 253  & 8.10e{+}3 & 7.98e{-}3 & $-0.47$ & 3.387 & $-0.52$\\
U32 & uniform, $h=1/32$ & 128 & -- & 933  & 1.19e{+}5 & 2.46e{-}3 & $-0.44$ & 0.898 & $-0.49$\\
\hline
\end{tabular}
\end{adjustbox}
\end{table}

\begin{table}[htbp]
\centering
\caption{Contributions to the estimator $\eta=\max_n\eta_n^{\mathrm{rec}}+2\sigma_{1,N}$ for the runs of Table~\ref{tab:adapt} ($\eta_n^H=\eta_n^M=0$ in all runs), and the share of the memory-lag estimator in $\eta$.}\label{tab:contrib}
\small
\begin{adjustbox}{max width=\linewidth}
\begin{tabular}{lrrrrrr}
\hline
run & $\max_n\eta_n^{\mathrm{rec}}$ & $\sum_n\tau_n\eta_n^S$ & $\sum_n\tau_n\eta_n^T$ & $\sum_n\tau_n\eta_n^Q$ & $\sum_n\tau_n\eta_n^L$ & $2\sum_n\tau_n\eta_n^L/\eta$\\
\hline
A0  & 0.0089 & 0.0143 & 0.0929 & 0.0883 & 0.5810 & 74\%\\
A1  & 0.0066 & 0.0107 & 0.0707 & 0.0628 & 0.4031 & 73\%\\
A2  & 0.0045 & 0.0074 & 0.0513 & 0.0447 & 0.2947 & 74\%\\
U8  & 0.3468 & 0.5226 & 2.9952 & 1.0668 & 1.9052 & 29\%\\
U16 & 0.1080 & 0.1679 & 0.5142 & 0.3121 & 0.6454 & 38\%\\
U32 & 0.0285 & 0.0440 & 0.1136 & 0.0783 & 0.1989 & 44\%\\
\hline
\end{tabular}
\end{adjustbox}
\end{table}

\begin{figure}[htbp]
\centering
\includegraphics[width=\linewidth]{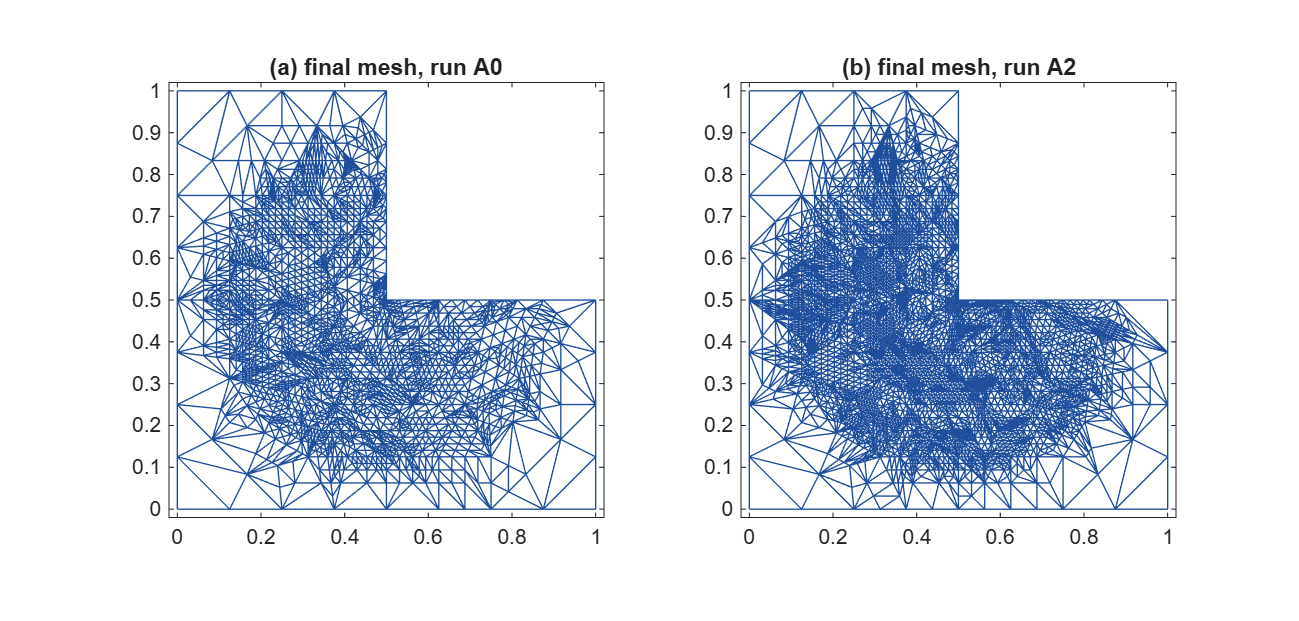}
\caption{Final adaptive meshes: (a) run A0 (4264 triangles), (b) run A2 (8644 triangles).}\label{fig:Lshape}
\end{figure}

\begin{figure}[htbp]
\centering
\includegraphics[width=0.5\linewidth]{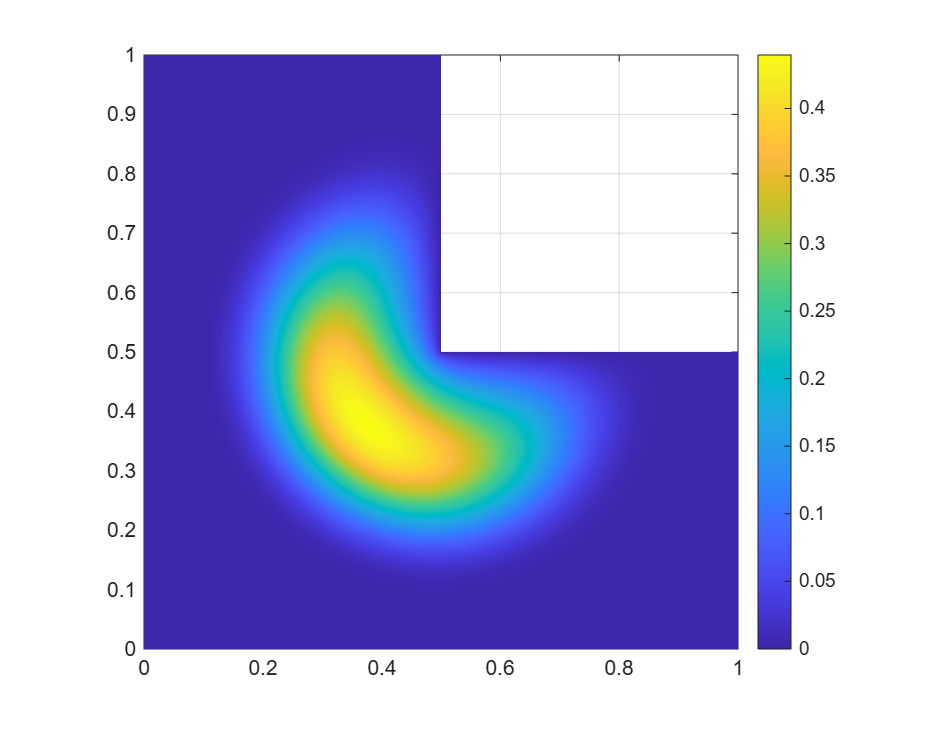}
\caption{Computed solution $U^N$ at $t=T=1$, run A2.}\label{fig:Lshape_sol}
\end{figure}

\begin{figure}[htbp]
\centering
\includegraphics[width=\linewidth]{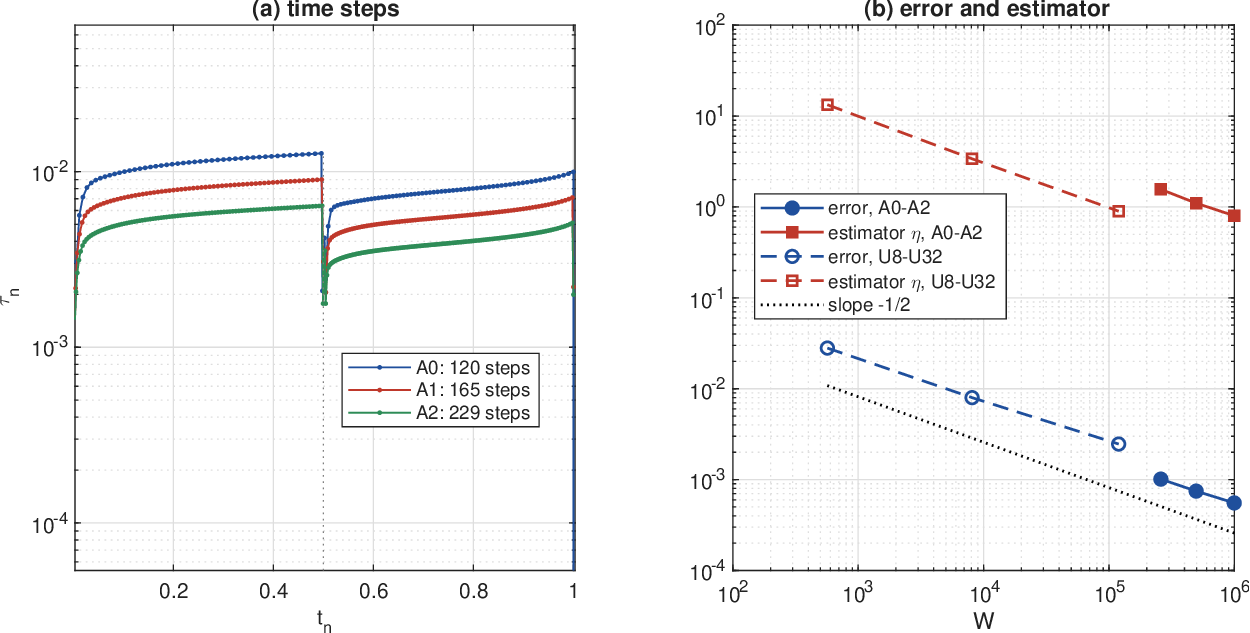}
\caption{L-shaped benchmark, $\alpha=1/2$. (a) Time step sizes $\tau_n$ of Algorithm~\ref{alg:adapt} for the runs A0, A1, A2; the temporal layer occurs at $t=1/2$. (b) Error and estimator $\eta$ versus the work $W=\sum_n\#\mathrm{dof}_n$ for the adaptive runs (filled markers) and the uniform runs (open markers); the dotted line has slope $-1/2$.}\label{fig:adapt}
\end{figure}

\subsubsection{Adaptive meshes, time steps and convergence} All mesh refinement takes place during the first time step; afterwards the meshes are not changed, so that $\#\mathrm{dof}$ in Table~\ref{tab:adapt} is also the number of unknowns in every time step and $W=N\cdot\#\mathrm{dof}$ for every run. Since the mesh is not changed after the first time step and $U^0=0$, the history mesh-change estimator $\eta_n^H$ vanishes, and $\eta_n^M=0$ by nestedness. The final meshes (Figure~\ref{fig:Lshape}) are refined most strongly at the re-entrant corner and in the ring $0.05<r<0.45$ where the cutoff derivatives contribute to $A\Phi$, whereas they remain coarse outside the support of $\Phi$. The grading at the corner is moderate, since the $r^{2/3}$ singularity affects the error in the $L^2(\Omega)$-norm only mildly; most of the refinement serves to resolve the data in the ring, triggered by the data oscillation $\mathrm{osc}_{n,K}$. The computed solution at $t=T$ is shown in Figure~\ref{fig:Lshape_sol}.

The time steps in Figure~\ref{fig:adapt}(a) are graded at both temporal layers. They start at $\tau_1=10^{-4}$ and grow by a factor of at most $2$ per step away from $t=0$. At $t=1/2$, where $g'$ becomes unbounded again, the algorithm detects the new layer only through $E_n$ and reduces the step sharply, after which the steps grow again. Also, each reduction of the tolerances by $1/\sqrt2$ increases the number of time steps by a factor of about $1.4$ ($N=120$, $165$, $229$; Table~\ref{tab:adapt}).

The adaptive runs in Table~\ref{tab:adapt} give successive error slopes $-0.47$ and $-0.43$, and estimator slopes $-0.54$ and $-0.46$, with respect to $W$. Across A0--A2, the slopes are approximately $-0.45$ and $-0.50$. The uniform runs give similar slopes, $\mathrm{EOC}_e=-0.47$ and $-0.44$ and $\mathrm{EOC}_\eta=-0.52$ and $-0.49$. Here, the slope $-1/2$ in Figure~\ref{fig:adapt}(b) is a reference scaling. For sufficiently regular solutions, the model $\|e\|\sim h^2+\tau$, together with $\tau\sim h^2$, gives $W\sim h^{-4}$ in two dimensions and hence $\|e\|\sim W^{-1/2}$. Thus, the adaptive sequence does not
improve the convergence rate with respect to $W$, but it improves the error level: A0 attains the error
$1.01\cdot10^{-3}$ with $W=2.59\cdot10^5$, whereas U32 attains $2.46\cdot10^{-3}$ with $W=1.19\cdot10^5$,
i.e., a $2.4$ times smaller error with about $2.2$ times the work.

We observe that the effectivity indices $\eta/\|e\|$ in Table~\ref{tab:adapt} are approximately 
$1.4$--$1.5\times10^3$ for the adaptive runs and $3.7$--$4.8\times10^2$ for the uniform runs. These values should be interpreted with care: the constants in the estimates are not evaluated but set to one, so the effectivity index is determined only up to these constants, and computable (sharper) values of them would change its magnitude.
In addition, Table~\ref{tab:contrib} shows that the memory-lag term contributes $73$--$74\%$ of the adaptive estimator; the estimator $\eta_n^L$ in \eqref{eq:etaL} retains large discrete derivatives from the initial layer and makes this contribution conservative. Nevertheless, the estimator decreases at nearly the same rate as the
error, which is consistent with the observed adaptive performance.

\subsection{Dependence on \texorpdfstring{$\alpha$}{alpha}}\label{sec:num-alpha}
To illustrate Remark~\ref{rem:alpha} numerically, we repeat the L-shaped experiment for the kernels $\mathcal{K}(t)=t^{\alpha-1}/\Gamma(\alpha)$ with $\alpha\in\{0.001,0.01,0.1,0.5,0.9,0.99,0.999\}$, i.e., values close to both limits $\alpha\to0^+$ and $\alpha\to1^-$ together with three intermediate values; $\alpha=0.5$ corresponds to the same nominal parameter choice as Table~\ref{tab:adapt}. The exact solution $u=g(t)\Phi(x)$ is the same as above; only the source term changes: $f=g'\Phi+(\mathcal{K}*g)A\Phi$ with $(\mathcal{K}*g)(t)=\frac{\Gamma(7/4)}{\Gamma(7/4+\alpha)}\big(t^{3/4+\alpha}+(t-\frac12)_+^{3/4+\alpha}\big)$. The tolerances and other algorithmic parameters are kept fixed, and for each $\alpha$ we compute the adaptive runs A0 and A2 (tolerances $(0.12,0.12,0.2)$ and $(0.06,0.06,0.1)$) and the uniform runs U16 and U32. The $\alpha=0.5$ runs in Table~\ref{tab:alpha} use different spatial meshes from those in Table~\ref{tab:adapt}, which accounts for the differences in the errors and estimators, although the time step counts coincide ($N=120$ for A0 and $N=229$ for A2).

\begin{table}[htbp]
\centering
\small
\caption{Dependence on $\alpha$: number of time steps $N$, error $\|e\|$ and estimator $\eta$ of the adaptive runs A0 and A2, and the reduction factors of the estimator and of the error from A0 to A2 (tolerances halved) and from U16 to U32 (mesh size halved).}\label{tab:alpha}
\begin{adjustbox}{max width=\linewidth}
\begin{tabular}{c|ccc|ccc|cc|cc}
\hline
 & \multicolumn{3}{c|}{A0} & \multicolumn{3}{c|}{A2} & \multicolumn{2}{c|}{A0$\to$A2} & \multicolumn{2}{c}{U16$\to$U32}\\
$\alpha$ & $N$ & $\|e\|$ & $\eta$ & $N$ & $\|e\|$ & $\eta$ & $\eta$ & $\|e\|$ & $\eta$ & $\|e\|$\\
\hline
0.001 & 124 & 7.61e{-}4 & 1.882 & 239 & 3.61e{-}4 & 0.959 & 0.51 & 0.47 & 0.26 & 0.32\\
0.01 & 125 & 7.76e{-}4 & 1.844 & 240 & 3.61e{-}4 & 0.950 & 0.52 & 0.46 & 0.26 & 0.32\\
0.1 & 126 & 9.75e{-}4 & 1.747 & 240 & 4.79e{-}4 & 0.904 & 0.52 & 0.49 & 0.26 & 0.32\\
0.5 & 120 & 1.08e{-}3 & 1.663 & 229 & 5.52e{-}4 & 0.803 & 0.48 & 0.51 & 0.27 & 0.31\\
0.9 & 103 & 1.13e{-}3 & 1.823 & 199 & 5.94e{-}4 & 0.895 & 0.49 & 0.53 & 0.27 & 0.31\\
0.99 & 100 & 1.36e{-}3 & 1.785 & 190 & 6.88e{-}4 & 0.917 & 0.51 & 0.51 & 0.27 & 0.32\\
0.999 & 99 & 1.09e{-}3 & 1.778 & 188 & 6.46e{-}4 & 0.924 & 0.52 & 0.59 & 0.27 & 0.32\\
\hline
\end{tabular}
\end{adjustbox}
\end{table}

\begin{figure}[htbp]
\centering
\includegraphics[width=0.85\linewidth]{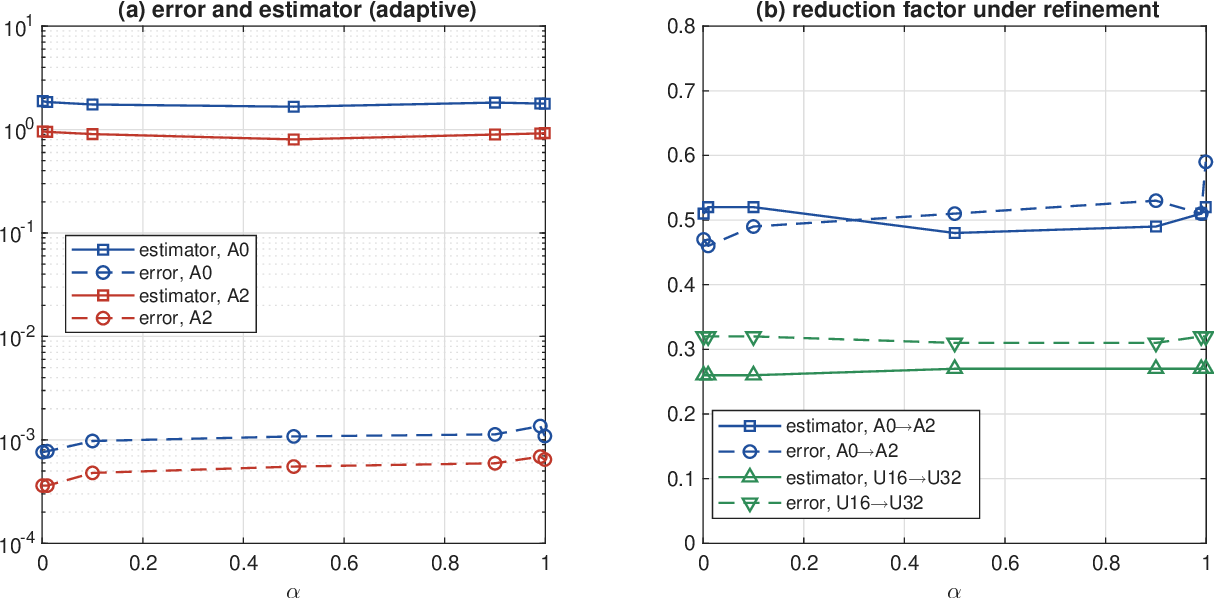}
\caption{Dependence on $\alpha$: (a) error and estimator of the adaptive runs A0, A2; (b) reduction factors of the estimator and of the error under refinement (A0$\to$A2 and U16$\to$U32), i.e., the last four columns of Table~\ref{tab:alpha}.}\label{fig:alpha}
\end{figure}

Table~\ref{tab:alpha} and Figure~\ref{fig:alpha} show the following:
\begin{enumerate}[label=(\alph*),leftmargin=*]
\item For the seven tested values of $\alpha$, the estimator remains within a narrow range: it lies between $1.66$ and $1.88$ for A0 and between $0.80$ and $0.96$ for A2, also for $\alpha=0.001$ and $\alpha=0.999$. Moreover, the estimators for $\alpha=0.001$ and $\alpha=0.01$ ($1.882$ and $1.844$), and those for $\alpha=0.99$ and $\alpha=0.999$ ($1.785$ and $1.778$), almost coincide, in accordance with the finite limits of the weights as $\alpha\to0^+$ and $\alpha\to1^-$ in Remark~\ref{rem:alpha}.
\item The estimator has similar reduction factors for all tested values of $\alpha$. Halving the tolerances (A0$\to$A2) reduces it by the factor $0.48$--$0.52$ and the error by $0.46$--$0.59$; halving the mesh size of the uniform runs (U16$\to$U32) reduces the estimator by $0.26$--$0.27$ and the error by $0.31$--$0.32$, for all tested values of $\alpha$ (last four columns of Table~\ref{tab:alpha}, and Figure~\ref{fig:alpha}(b)).
\item The error of A0 lies between $7.61\cdot10^{-4}$ and $1.36\cdot10^{-3}$, i.e., it changes by less than a factor of two over $\alpha\in[0.001,0.999]$, and the number of time steps of A0 lies between $99$ and $126$; it decreases from $126$ to $99$ as $\alpha$ increases from $0.1$ to $0.999$. The same tolerances give comparable errors and time step counts for the tested values. 
\end{enumerate}

\section{Conclusion}\label{sec:6}
In this article, we have established optimal-order \emph{a posteriori} error estimates for evolution equations with positive-type memory (both smooth and weakly singular kernels) in the $L^{\infty}(0,T;L^{2}(\Omega))$-norm, for both the spatially semidiscrete and fully discrete backward Euler schemes. 
The derived \emph{a posteriori} error estimates not only enhance the understanding of the accuracy of numerical solutions, but also contribute to the development of more reliable computational algorithms for problems exhibiting memory effects. Furthermore, the utilization of the elliptic reconstruction technique in the context of positive-type memory evolution equations represents a novel approach, extending the applicability of this powerful tool to a broader class of mathematical models. A key ingredient of the fully discrete analysis is the memory-lag estimate, which retains the cancellation between the memory terms at $t$ and at $t_n$ and yields an estimator of first order in time that depends on the kernel only through $\mathcal{K}_1$. The derived upper bounds are robust: the constants in the upper bounds are independent of the final time, the mesh size, the time steps, and the kernel; no exponential growth in time occurs; and, for the weakly singular kernel, the $\alpha$-dependent factors in the estimators remain uniformly bounded in $\alpha\in(0,1)$. The estimators also drive a space--time adaptive algorithm which, for a solution with a corner singularity and two temporal layers, refines the mesh at the corner and grades the time steps at both layers. 

\noindent
\section*{Acknowledgement.} 
The third author gratefully acknowledges the financial support from ANRF, India [Grant Number \\
MTR/2022/000130].

\bibliography{sandy}
\bibliographystyle{plain}
\end{document}